\theoremstyle{plain}%
\newtheorem{theorem}{Theorem}
\newtheorem{proposition}[theorem]{Proposition}%
\newtheorem{lemma}[theorem]{Lemma}
\newtheorem{corollary}[theorem]{Corollary}
\theoremstyle{remark}%
\newtheorem{remark}{Remark}%
\theoremstyle{definition}%
\newtheorem{definition}{Definition}%
\newtheorem{assumption}{Assumption}
\newcommand{\N}{\mathbb{N}}
\newcommand{\R}{\mathbb{R}}
\newcommand{\e}{\mathrm{e}}
\newcommand{\dif}{\mathrm{d}}
\newcommand{\ev}[1]{\mathbb{E}\left[#1\right]}
\newcommand{\prob}[1]{\mathbb{P}\left[#1\right]}
\begin{document}


\title[]{A scaling limit theorem for controlled branching processes with a size-divisible term}


\author[2,3]{\fnm{Miguel} \sur{Gonz{\'a}lez}\orcidlink{0000-0001-7481-6561}}
\email{mvelasco@unex.es}

\author[1,2]{\fnm{Pedro} \sur{Mart{\'i}n-Ch{\'a}vez}\orcidlink{0000-0001-5530-3138}}
\email{pedro.martin-chavez.1@warwick.ac.uk}

\author[2,3]{\fnm{In{\'e}s} \sur{del Puerto}\orcidlink{0000-0002-1034-2480}}
\email{idelpuerto@unex.es}

\affil[1]{\orgdiv{Department of Statistics}, \orgname{University of Warwick}, \orgaddress{\postcode{CV4 7AL} \city{Coventry}, \country{UK}}}

\affil[2]{\orgdiv{Departamento de Matemáticas}, \orgname{Universidad de Extremadura}, \orgaddress{\postcode{06006} \city{Badajoz}, \country{Spain}}}

\affil[3]{\orgdiv{Instituto de Computaci{\'o}n Cient{\'i}fica Avanzada}, \orgname{Universidad de Extremadura}, \orgaddress{\postcode{06006} \city{Badajoz}, \country{Spain}}}


\abstract{This paper establishes general sufficient conditions for a sequence of controlled branching processes to converge weakly on the Skorokhod space. We focus on a class of control mechanisms that extend previous results by decomposing those random variables into the sum of two independent components: an immigration term, which depends on the current population size, and a size-divisible term, which can be expressed as the sum of random contributions from each individual. This extension allows us to capture a broad range of control functions including Poisson, binomial, and negative binomial distributions, commonly used in the literature. The assumptions are formulated in terms of probability generating functions of the offspring and control laws, distinguishing in this latter between the immigration and the size-divisible parts. The limit process is shown to be a continuous-state branching process with dependent immigration. The proof essentially relies on tightness arguments and the identification of a martingale problem. We also identify the special case in which the limit reduces to a classical Feller branching diffusion with immigration.}

\keywords{branching process, weak convergence, infinitesimal generator, martingale problem, limit theorem, scaling limit}


\pacs[MSC Classification]{60J80, 60F05}

\maketitle

\section{Introduction}\label{sec1}

Controlled branching processes (CBPs) are a class of discrete-time, discrete-state branching processes that arise as a modification of the classical Galton--Watson processes (GWPs). Widely used in population dynamics, they are a very general family of models characterized by the fact that the number of parents in each generation is determined by random variables known as control functions $\{\phi^{(n)}(j)\}$. This random mechanism is interpreted as the impact of the environment on reproduction law and is the key feature of the process that gives it its great versatility. The model was introduced by \cite{sevastyanov-zubkov-1974} but with deterministic control functions, the extension to random control functions is due to \cite{yanev-1976}. A comprehensive discussion of most of the results available in the literature for CBPs can be found in the following monograph \cite{miguel-ines-yanev-2018}.

The question to be addressed in this article is the study of the convergence of rescaled sequences of CBPs in the Skorokhod space. This problem is sometimes referred to in the literature as scaling limit theorems or high-density limit theorems. It has been considered by many authors for several branching models.
For instance, it is well known that rescaled GWPs converge to  continuous-time, continuous-state branching processes. 
This statement can be found in the works of \cite{lamperti-1967} and \cite{grimvall-1974}. In fact, they generalise a previous result of \cite{feller-1951}, who obtained the so-called Feller branching diffusion as the limit process.
Something similar happens when immigration comes into play. Scaling limits of Galton--Watson processes with immigration 
result in continuous-state branching processes with immigration, 
as shown by \cite{kawazu-watanabe-1971} or \cite{LI-2006}. See also \cite{wei-winnicki-1989} for a Feller branching diffusion with immigration.
However, the problem is far from being completely solved in the case of CBPs.  The first contributions in this sense were diffusion approximation results in the works by \cite{Sriram}, \cite{miguel-ines-2012} and \cite{miguel-pedro-ines-2023}, but only a partial answer to the question is given.

Recently, \cite{liu-2024} showed a scaling limit of CBPs whose control functions can be written as $\phi^{(n)}(j) = j + \psi^{(n)}(j)$, where $\psi^{(n)}(j)$ are non-negative integer valued random variables. It turns out that the limiting process is a continuous-state process with dependent immigration (CSBPDI). It is the purpose of this paper to extend those results by Liu to control variables such that the difference $\phi^{(n)}(j)-\psi^{(n)}(j)$ may be also a non-negative random variable $\varphi^{(n)}(j)$, not necessarily the deterministic case $\varphi^{(n)}(j) = j.$
In line with Liu's work, we call $\psi^{(n)}(j)$ the immigration term. In turn, we will denote $\varphi^{(n)}(j)$ as the size-divisible term because we will assume that these random variables are $j$-divisible. In essence, this means that they can be expressed as a sum of $j$ random variables. A more precise definition will be given later. The extension explained in this paragraph will allow dealing with more general control variables than those proposed by Liu as explained in Section \ref{sec4}.


Apart from the aforementioned hypothesis, convergence conditions for the generating probability functions (PGFs) of the reproduction law and the immigration term are required. In addition to divisibility, smoothness conditions are assumed for the size-divisible term.

Besides this introduction, the structure of the paper is as follows. Section \ref{sec2} mathematically defines the stochastic models of the present work, that is, CBPs and CSBPDIs. In section \ref{sec3} we rigorously outline the problem to be solved as well as the hypotheses assumed in the article and the statement of the main theorem. Section \ref{sec4} is dedicated to a discussion comparing our contribution with previous works on this problem. We also include some examples of control functions to which the results can be applied.
In section \ref{sec5} we collect all the results of the paper that lead to the proof of the main theorem. Finally, an Appendix  contains the proof of an auxiliary lemma.


\section{Probability models}\label{sec2}


\subsection{Controlled branching process}

Let $(\Omega, \mathcal{F}, \mathbb{P})$ be the probability space where all the random variables of this paper are defined. Positive integers and non-negative integers are denoted by $\N$ and $\N_0$, respectively. Consider a family of independent and identically distributed (IID) random variables $\{X^{n,i}: n\in\N_0,i\in\N\}$ taking values in $\N_0$ and with PGF $g$. Let $\{\phi^{(n)}(j): n\in\N_0\}$ be an IID environment also taking values in $\N_0$ and with PGF $c^{(j)}$ for each $j\in\N_0$. {The distributions of $\{X^{n,i}: n\in\N_0,i\in\N\}$ and $\{\phi^{(n)}(j): n\in\N_0\}$ are called, respectively, the offspring and the control distributions. It is assumed independence between the offspring and control distributions.}  Given a $\N_0$-valued random variable $Z(0)$ independent of $\{X^{n,i},\phi^{(n)}(j):n,j\in\N_0,i\in\N\}$, a CBP is defined inductively as
\begin{equation*}
    Z(n+1) = \sum_{i=1}^{\phi^{(n)}(Z(n))}X^{n,i}, \qquad n\in\N_0,
\end{equation*}
with the convention $\sum_{i=1}^0 = 0.$
Then the discrete (time and state) Markov chain $\{Z(n): n\in\N_0\}$ is completely characterized by the random initial generation $Z(0)$, the offspring PGF $g$ and the control PGFs         $\{c^{(j)}: j\in\N_0\}$, using the standard terminology. 

The intuitive interpretation says that $Z(n)$ denotes the size of the $n$-th generation of the population and $X^{n,i}$
is the offspring size of the $i$–th progenitor in the $n$–th generation, where the
total number of progenitors is given by $\phi^{(n)}(Z(n))$. 

Lastly, we will denote by $(Q(i,j): i, j\in\N_0)$ the one-step transition matrix of $\{Z(n): n\in\N_0\}$, whose definition follows from
\begin{equation}\label{transition-matrix-CBP}
    \sum_{j=0}^\infty Q(i,j)s^j
    =
    \mathbb{E}\left[g(s)^{\phi^{(n)}(i)}\right]
    =
    c^{(i)}(g(s)), 
    \qquad s\in [0,1].
\end{equation}

We refer the reader to \cite{miguel-ines-yanev-2018} for further details.


\subsection{Continuous-state branching process with dependent immigration}\label{subsec CBI}

According to \cite{li2019}, a CSBPDI is a Markov process in $[0,\infty)$ with generator $\mathcal{L}$ defined by
\begin{align*}
    \mathcal{L}f(x) = &- a x f'(x) + \frac{1}{2} b^2 x f''(x) + x \int_0^\infty [f(x+y) - f(x) - y f'(x)] \mu (\dif y) \\
    & + \alpha(x) f'(x) + \int_0^\infty [f(x+y)-f(x)]r(x,y) \nu(\dif y), \qquad f \in C_c^2([0,\infty)),
\end{align*}
where $C_c^2([0,\infty))$ is the set of bounded continuous real functions on $[0,\infty)$ with compact support and with bounded continuous derivatives up to the second order, 
$a\in\R$ and $b\in[0,\infty)$ are constants, $\mu$ and $\nu$ are $\sigma$-finite measures on $(0,\infty)$ with 
\begin{equation*}
    \int_0^\infty \left(u\wedge u^2\right)\mu(\dif u) <\infty\quad \mbox{ and }\quad \int_0^\infty \left(1\wedge u\right)\nu(\dif u) <\infty,
\end{equation*}
and 
$\alpha$ and $r$ are non-negative Borel functions on $[0,\infty)$ and  $[0,\infty)\times (0,\infty)$ such that satisfy, respectively:
\begin{quote}
\textit{Linear growth condition.} There exists a constant $K\geq 0$ verifying 
    \begin{equation}\label{cond-alpha-r}
        \alpha(x)  + \int_0^\infty r(x,y) y \nu (\dif y ) \leq K(1+x), \qquad x\in [0,\infty).
    \end{equation}
    
\noindent \textit{Yamada--Watanabe type condition.} There exists a non-decreasing concave function $\beta:[0,\infty)\to [0,\infty)$ verifying $\int_{0+} \beta (y)^{-1} \dif y = \infty$ and 
    \begin{equation*}
        |\alpha (x)  -\alpha(y)| + \int_0^\infty | r(x,z)-r(y,z)|z\nu(\dif z) \leq \beta(|x-y|), \qquad x,y\in [0,\infty).
    \end{equation*}
\end{quote}

Alternatively, there is an equivalent way of defining such processes using stochastic integral equations, see \cite[Theorem 5.4]{li2019}.
Let $(\Omega, \mathcal{F}, \mathcal{F}_t, \mathbb{P})$ be a filtered probability space under the usual hypothesis. 
Let us consider a standard $(\mathcal{F}_t)$-Wiener process $(\mathcal{W}(t):t\in[0,\infty))$, a $(\mathcal{F}_t)$-Poisson random measure $M(\dif s, \dif u, \dif z)$ on $(0,\infty)^3$ with intensity $\dif s\mu(\dif u)\dif z$ and a $(\mathcal{F}_t)$-Poisson random measure $N(\dif s, \dif u,\dif z)$ on $(0,\infty)^3$ with intensity $\dif s\nu(\dif u)\dif z$. Denoting by $\tilde{M}(\dif s, \dif u, \dif z) = M(\dif s, \dif u, \dif z) - \dif s\mu(\dif u)\dif z$ the corresponding compensated measure of $M$,  the CSBPDI can be characterized as the non-negative pathwise solution of 
        \begin{align*}
            y(t) &= y(0) + \int_0^t [\alpha (y(s)) - ay(s)]\dif s + b\int_0^t \sqrt{y(s)}\, \dif\mathcal{W}(s) \nonumber \\
            &\quad + \int_0^t\int_0^\infty\int_0^{y(s-)} u\tilde{M}(\dif s,\dif u,\dif z) +  \int_0^t\int_0^\infty \int_0^{r(y(s-),u)} uN(\dif s,\dif u,\dif z).
        \end{align*}

        With the previous notation, for each triple $(a,b,\mu)$,
we define a function $G$ called branching mechanism given by
\begin{equation}\label{eq-branch-mech}
    G(\lambda) = a\lambda + \frac{1}{2}b^2\lambda^2 + \int_0^\infty \left(\e^{-\lambda u}-1+\lambda u\right)\mu(\dif u), \qquad \lambda\in [0,\infty).
\end{equation}
Likewise, for each triple $(\alpha,\nu, r)$, we introduce another  function $H$ defined as 
\begin{equation}\label{eq-immigr-mech}
    H(x,\lambda) = \alpha(x) \lambda + \int_0^\infty \left(1-\e^{-\lambda u}\right)r(x,u)\nu(\dif u), \qquad x,\lambda\in [0,\infty).
\end{equation}
We called this second function as dependent immigration mechanism.

\section{Setup and main results}\label{sec3}

Once the models discussed in the paper have been introduced, we properly formulate the problem to be addressed and our main contribution.

Let us consider a sequence of CBPs  $\{Z_k(n): n\in\N_0\}_{k\in\N}$ built from a sequence of offspring functions $\{g_k\}_{k\in\N}$ and a sequence of control functions $\{c_k^{(j)}: j\in\N_0\}_{k\in\N}$. For each $k\in\N$, $\{k^{-1} Z_k(n) : n\in\N_0\}$ is a discrete Markov chain with state space $k^{-1} \N_0 = \{k^{-1}n : n\in\N_0\}$. 
Let $\{\gamma_k\}_{k\in\N}$ be a sequence of positive real numbers, then for each $k\in\N$ we introduce a continuous-time stochastic process $(z_k(t) : t\in [0,\infty))$ given by
\begin{equation}\label{eq-scaling-CBP}
    z_k(t) = \frac{1}{k} Z_k(\lfloor \gamma_k t \rfloor), \qquad t\in [0,\infty).
\end{equation}
Therefore, for all $k\in \N$ we defined a process whose paths belong to the space of non-negative càdlàg functions over the half-line, $D([0,\infty))$, endowed with the Skorohod topology, and where the half-line $[0,\infty)$ is endowed with the Euclidean distance.
The goal in this work is to establish the weak convergence on $D([0,\infty))$ of such processes $z_k$ as $k\to\infty$. Under certain assumptions, the limit process will be a CSBPDI. 

For each $k\in\N$ and $n,j\in\N_0$, the starting point is that every control variable $\phi_k^{(n)}(j)$ may be decomposed as $\phi_k^{(n)}(j) = \varphi_k^{(n)}(j) + \psi_k^{(n)}(j)$, i.e. as the sum of two $\N_0$-valued random variables. Furthermore, properties of independence and same distribution are inherited. That is, we suppose that these new random variables are independent of each other, $\{\varphi_k^{(n)}(j):n\in\N\}$ are identically distributed for each $k\in\N$, $j\in\N_0$ and $\{\psi_k^{(n)}(j):n\in\N\}$ are also identically distributed for each $k\in\N$, $j\in\N_0$. The reader can summarize this paragraph as \textbf{Assumption 0.}


\medskip

\begin{assumption}\label{assump-gamma}
    There exists a constant $\gamma_0\in[0,\infty)$ such that $\lim_{k\to\infty} \gamma_k = \infty$ and $\lim_{k\to\infty} \gamma_k / k = \gamma_0 .$ 
\end{assumption}

\medskip

Above condition imposes that the time scaling sequence is always divergent but with a growth rate $O(k)$, using Landau notation.

\medskip

\begin{assumption}\label{assump-branch}
    There exists a constant $m\in (0,\infty)$ such that the sequence of functions $\{G_k\}_{k\in\N}$ is uniformly Lipschitz on each bounded interval and converges uniformly on each bounded interval as $k\to\infty$, where  
    \begin{equation}\label{sequuencegk}
        G_k(\lambda) = \frac{k\gamma_k}{m}\left[g_k\bigg(1-\frac{\lambda}{k}\bigg)-\left(1-\frac{m\lambda}{k}\right)\right], \qquad \lambda\in[0,k].
    \end{equation}
\end{assumption}

\medskip

\begin{remark}\label{rem-Lip}
    From Assumption \ref{assump-branch} it follows that there exists a  locally Lipschitz function $G:[0,\infty)\to\R$ such that $G_k\to G$ as $k\to\infty$ uniformly on each bounded interval. In particular, $G$ is continuous.
\end{remark}

\begin{remark}\label{rem-deriv-G_k}
    Under Assumption \ref{assump-branch}, the sequence of functions
    \begin{equation*}
        \frac{\dif G_k}{\dif\lambda}(\lambda) = \gamma_k \left[1-\frac{1}{m} \frac{\dif g_k}{\dif s}\bigg(1-\frac{\lambda}{k}\bigg)\right],\qquad \lambda\in[0,k],
    \end{equation*}
    is uniformly bounded on each bounded interval, 
    so there exists a positive constant $K_2>0$ such that
    \begin{equation}\label{bound-g'}
        \gamma_k \left|1-\frac{1}{m} \frac{\dif g_k}{\dif s}(1-)\right| \leq K_2, \qquad k\in\N.
    \end{equation} Therefore, for eack $k\in \N$, the mean of the reproduction law (or offspring mean) is finite and, by Assumption \ref{assump-gamma}, the derivative of the offspring PGFs verifies
    \begin{equation*}
        \frac{\dif g_k}{\dif s} \bigg(1-\frac{\lambda}{k}\bigg)\to m, \qquad \text{as } k\to\infty,
    \end{equation*}
    uniformly on each bounded interval. As a consequence, the candidate for being $m$ is precisely the limit of the convergent sequence of offspring means.
\end{remark}

\medskip

The only restriction imposed on the offspring distribution  is Assumption \ref{assump-branch}. Similar conditions have been previously assumed by other authors, cf. \cite{LI-2006} and \cite{liu-2024}, but they always force $m=1$, which for us is not necessary. This allows us to work with reproduction laws whose offspring mean sequence  converges to a number other than 1.

Next, for each $k\in \N$ and $j\in\N_0$, let us denote by $h_k^{(j)}$ the PGF of the identically distributed random variables $\{\psi_k^{(n)}(j):n\in\N\}$. 

\medskip

\begin{assumption}\label{assump-immigr}
For each $c_1,c_2\in [0,\infty)$, the sequence of functions $\{H_k\}_{k\in\N}$ converges to the dependent immigration mechanism $H$ defined in \eqref{eq-immigr-mech} uniformly on $[0,c_1]\times [0,c_2]$ as $k\to\infty$, where
\begin{equation}\label{eq-def-H_k}
        H_k(x,\lambda) = \gamma_k\left[1-h_k^{(\lfloor kx \rfloor)}\bigg(1-\frac{\lambda}{k}\bigg)\right], \qquad x\in [0,\infty),\quad  \lambda\in[0,k].
    \end{equation}
\end{assumption}

\begin{assumption}\label{assump-cota-H} 
    There exists a constant $K_1>0$ such that
    $$
\left|\frac{\partial H_k}{\partial \lambda}( x,0)\right|=\frac{\gamma_k}{k} \frac{\mathrm{d} h_k^{(\lfloor k x\rfloor)}}{\mathrm{d} s} (1-) \leq K_1(1+x), \qquad x \in [0,\infty), \quad k \in\N.
$$
\end{assumption}

Respect to the conditions over $\varphi_k^{(n)}(j)$, we introduce formally the concept of $j$-divisibility ($j\in\N_0$) for random variables.

\medskip

\begin{definition}\label{def-j-div}
    A random variable $Y$ is called $j$-divisible ($j\in\N_0$) if it may be expressed as the sum of $j$ IID random variables. By convention we establish that $0$-divisible means $Y$ is identically null.
\end{definition}

\medskip

The reader should note that a particular case is any infinitely divisible distribution, i.e. one that is $j$-divisible for all $j\in\N$. To delve deeper into the divisibility of random variables, see e.g. \cite{book-divisible}.

The next condition can be seen as the most relevant in our paper and will play a key role in the proofs of the main results.

\medskip

\begin{assumption}\label{assump-div}
    For all $k\in\N$ and $n,j\in\N_0$, the random variable $\varphi_k^{(n)}(j)$ is $j$-divisible.
\end{assumption}

\medskip

CBPs with $j$-divisible control functions and their link with population-size-dependent branching processes have been recently studied in \cite{braunsteins2023linking}.

\medskip

\begin{remark}\label{rem-div}
    For all $k\in\N$, the control PGFs may be written as $c_k^{(0)} = h_k^{(0)}$ and
    \begin{equation*}
        c_k^{(j)} = \left[f_k^{(j)}\right]^j h_k^{(j)}, \qquad j\in\N,
    \end{equation*}
    where $f_k^{(j)}$ is the $j$-th root of the PGF of $\varphi_k^{(n)}(j)$.
\end{remark}

\medskip

Apart from the divisibility property on $\varphi_k^{(n)}(j)$, we suppose also the following {condition on the functions $f_k^{(j)}$.}

\medskip

\begin{assumption}\label{assump-moment}
There exist constants ${\varrho}\in\R$ and ${\varsigma}\in[0,\infty)$ such that

\begin{equation}\label{conv-F}
    \lim_{k\to\infty} \sup_{j\in \mathbb{N}, \lambda \in [0,M]} \left|F_{k}^{(j)} (\lambda) - F(\lambda)\right| = 0, \qquad \text{for each } M>0,
   \end{equation}
   where $F(\lambda) = \varrho\lambda + \frac{1}{2}\varsigma^2\lambda^2$ for all $\lambda \in [0,\infty)$ and 
   $$F_{k}^{(j)} (\lambda) = k \gamma_k \left[f_k^{(j)}\bigg(1-\frac{\lambda}{k}\bigg)-\left(1-\frac{\lambda}{mk}\right)\right], \qquad \lambda \in [0,k], \qquad k,j\in \mathbb{N.}$$
   Further, we assume that the functions $\{F_{k}^{(j)}\}_{k,j\in\mathbb{N}}$ are uniformly Lipschitz on each bounded interval.
\end{assumption}

\medskip

\begin{remark}\label{rem-f}
Under the Lipschitz condition in Assumption \ref{assump-moment}, the functions
    \begin{equation*}
    \frac{\dif F_{k}^{(j)}}{\dif\lambda}(\lambda) = \gamma_k \left[\frac{1}{m}-\frac{\dif f_k^{(j)}}{\dif s}\bigg(1-\frac{\lambda}{k}\bigg)\right],\qquad \lambda\in[0,k],
    \end{equation*}
    are uniformly bounded on each bounded interval, 
    so there is a positive and finite constant given by
    \begin{equation}\label{K3def}
    K_3 = \sup_{k,j\in\N}   \left|\gamma_k \left( 1- m \frac{\dif f_k^{(j)}}{\dif s}(1-)\right)\right|.
    \end{equation}
\end{remark}


Finally, we need an hypothesis on the initial generations $z_k(0)$. To be precise, the uniform boundedness of the first moment is required. 

\medskip 

\begin{assumption}\label{assump-init-gen}
    $\sup_{k\in\N} \mathbb{E} [z_k(0)]<\infty.$
\end{assumption}

\medskip

Having introduced all the working hypotheses, we can now present the main result of the paper.

\medskip

\begin{theorem}\label{main-thm}
    Let $(z_k(t) : t\in [0,\infty))$ be the sequence defined in \eqref{eq-scaling-CBP} and suppose that Assumptions \ref{assump-gamma}-\ref{assump-init-gen} hold. If $z_k(0)$ converges in distribution to $z(0)$ as $k\to\infty$, then $(z_k(t) : t\in [0,\infty))$ converges in distribution on $D([0,\infty))$ to 
    $(z(t):t\in[0,\infty))$, 
    which is the pathwise unique solution of
\begin{align}
            z(t) &= z(0) +  \int_0^t [m\alpha(z(s))-(a+{\varrho m}) z(s)]\dif s + \int_0^t \sqrt{(b^2+{\varsigma^2 m^2} )z(s)}\, \dif\mathcal{W}(s) \nonumber \\
            &\quad + \int_0^t\int_0^\infty\int_0^{z(s-)} u\tilde{M}(\dif s,\dif u,\dif z) +  \int_0^t\int_0^\infty \int_0^{r(z(s-),m^{-1}u)} uN(\dif s,m^{-1}\dif u,\dif z) \label{eq:int-stoch}
        \end{align}
    with initial value $z(0)$. Therefore, $(z(t):t\in[0,\infty))$ 
    is a CSBPDI 
    with branching mechanism 
    $$\lambda\in [0,\infty)\longmapsto G(\lambda) + {F(m\lambda)},$$ 
    dependent immigration mechanism 
    $$(x,\lambda)\in [0,\infty)^2\longmapsto H(x, m\lambda),$$ 
    and generator given by 
    \begin{align}\label{eq-generator-thm}
    Af(x) &= [m \alpha (x) - (a+{\varrho m}) x]f'(x) + \frac{1}{2}(b^2+{\varsigma^2 m^2}) xf''(x) \nonumber\\
    &\quad + x \int_0^\infty \left[f(x+y)-f(x)-yf'(x)\right]\mu(\dif y) \nonumber \\ 
    &\quad + \int_0^\infty \left[f(x+y)-f(x)\right]r(x,m^{-1}y) \nu( m^{-1} \dif y), \qquad f\in C_c^2([0,\infty)).
\end{align}
\end{theorem}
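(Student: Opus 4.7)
The plan is to apply the generator-convergence method (Theorem 4.8.2 of Ethier--Kurtz): I would show that the discrete infinitesimal generators $A_k$ of $(z_k(t))_{t\geq 0}$ converge to the generator $A$ in \eqref{eq-generator-thm} on a core of test functions, verify tightness of $(z_k)$ in $D([0,\infty))$, and identify any subsequential limit via the $A$-martingale problem. Uniqueness for this martingale problem with a prescribed initial law follows from pathwise uniqueness of the stochastic equation \eqref{eq:int-stoch}, which is guaranteed by the Yamada--Watanabe-type and linear-growth conditions of Section \ref{subsec CBI} (see \cite[Theorem 5.4]{li2019}); combined with tightness and generator convergence, this forces the whole sequence $(z_k)$ to converge to the unique solution of \eqref{eq:int-stoch} started from $z(0)$.

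For the generator step I would work with the exponentials $e_\lambda(x)=\e^{-\lambda x}$, whose linear span is measure-determining. Using \eqref{transition-matrix-CBP} and the factorisation $c_k^{(j)}(s) = [f_k^{(j)}(s)]^{j}\, h_k^{(j)}(s)$ from Remark \ref{rem-div}, for $x\in k^{-1}\N_0$,
\[
  A_k e_\lambda(x) \;=\; \gamma_k\Big\{\bigl[f_k^{(kx)}\bigl(g_k(\e^{-\lambda/k})\bigr)\bigr]^{kx}\, h_k^{(kx)}\bigl(g_k(\e^{-\lambda/k})\bigr) - \e^{-\lambda x}\Big\}.
\]
Writing $g_k(\e^{-\lambda/k}) = 1-\mu_k/k$ with $\mu_k\to m\lambda$ (by Assumption \ref{assump-branch} and Remark \ref{rem-deriv-G_k}), a second-order Taylor expansion of $f_k^{(kx)}$ at $s=1$ combined with \eqref{firs-derivative-f}, \eqref{second-derivative-f} and $\gamma_k/k\to\gamma_0$ yields, after taking logarithms and multiplying through by $\gamma_k$,
\[
  A_k e_\lambda(x) \;\longrightarrow\; e_\lambda(x)\Big[\, x\,G(m\lambda) + x\rho_0\lambda + \tfrac{1}{2}x\,\gamma_0\sigma_0\, m^2\lambda^2 - H(x,m\lambda)\,\Big],
\]
uniformly on compact $(x,\lambda)$-ranges (the immigration term coming from Assumption \ref{assump-immigr}). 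A direct check using the L\'evy--Khintchine forms \eqref{eq-branch-mech} and \eqref{eq-immigr-mech} shows the right-hand side equals $Ae_\lambda(x)$; extending to $f\in C_c^2([0,\infty))$ by standard approximation gives convergence on a core of $A$.

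For tightness, Assumptions \ref{assump-init-gen}, \ref{assump-cota-H}, Remark \ref{rem-deriv-G_k} and \eqref{firs-derivative-f} together yield a Gronwall-type bound $\sup_k \mathbb{E}[z_k(t)]\leq C_T(1+\mathbb{E}[z_k(0)])$ on compact time intervals, giving compact containment; Aldous' criterion applied to the Doob decomposition $z_k = z_k(0) + M_k + V_k$, whose predictable variation is controlled by the same linear-growth bounds, then gives tightness in $D([0,\infty))$. Any weak limit point solves the $A$-martingale problem with initial law $\mathcal{L}(z(0))$, and the pathwise uniqueness of \eqref{eq:int-stoch} identifies it uniquely as the CSBPDI of the statement.

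The main obstacle is the second-order Taylor expansion of $[f_k^{(kx)}(g_k(\e^{-\lambda/k}))]^{kx}$: one must extract \emph{simultaneously} the drift correction $\rho_0\lambda$ (from \eqref{firs-derivative-f}) and the Gaussian variance contribution $\tfrac12\gamma_0\sigma_0 m^2\lambda^2$ (from \eqref{second-derivative-f} and $\gamma_k/k\to\gamma_0$), while controlling the remainder uniformly in the index $j=\lfloor kx\rfloor$ on compacts. Assumption \ref{assump-div} is essential here: the $j$-divisibility of $\varphi_k^{(n)}(j)$ is precisely what permits the $j$-th-power factorisation of the control PGF, and thus ensures that the size-divisible part is \emph{absorbed into the branching mechanism and diffusion coefficient} of the limit, while only the genuine immigration term $\psi_k^{(n)}(j)$ contributes the dependent-immigration piece $H(x,m\lambda)$.
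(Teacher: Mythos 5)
Your overall architecture --- convergence of the discrete generators on the exponentials $e_\lambda$, tightness via a Gronwall-type first-moment bound and Aldous' criterion, identification of limit points through the $A$-martingale problem, and pathwise uniqueness of \eqref{eq:int-stoch} --- is exactly the route the paper takes. However, the central computation as you state it is wrong: your claimed limit $A_k e_\lambda(x)\to e_\lambda(x)\left[xG(m\lambda)+x\rho_0\lambda+\tfrac12 x\gamma_0\sigma_0 m^2\lambda^2-H(x,m\lambda)\right]$ contradicts both \eqref{eq-generator-thm} and the stated branching mechanism; the correct branching term is $xG(\lambda)$, not $xG(m\lambda)$, and your ``direct check'' that your right-hand side equals $Ae_\lambda(x)$ can only go through when $m=1$. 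The factor $m$ enters only the immigration part, because $h_k^{(\lfloor kx\rfloor)}$ is evaluated at $g_k(\e^{-\lambda/k})\approx 1-m\lambda/k$, whereas $G_k$ in Assumption \ref{assump-branch} is already defined through $g_k$ itself, so its argument is not rescaled in the limit. Moreover, your sketch (``second-order Taylor expansion, take logarithms, multiply by $\gamma_k$'') hides the cancellation on which the correct derivation hinges: replacing $1-\lambda/k$ by $\e^{-\lambda/k}$ produces an extra $+\tfrac12\gamma_0\lambda^2$ (the paper's $S_k(\lambda)\to G(\lambda)+\tfrac12\gamma_0\lambda^2$, see \eqref{lim-1-prop}), which is exactly compensated by a $-\tfrac12\gamma_0\lambda^2$ coming from the second-order term in $\log f_k^{(j)}$ versus $f_k^{(j)}-1$ (the paper's \eqref{log-f}); without tracking both terms the diffusion coefficient $b^2+m^2\gamma_0\sigma_0$ does not come out correctly.

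A second gap is the range of uniformity. You claim generator convergence ``uniformly on compact $(x,\lambda)$-ranges'', but Assumption \ref{assump-immigr} only gives locally uniform convergence of $H_k$, and to pass to the limit in the martingale problem (and in the estimate for $\mathbb{E}\left[|e_\lambda(z_k(\tau_k+\delta_k))-e_\lambda(z_k(\tau_k))|^2\right]$ used for Aldous' criterion) one needs control of $A_ke_\lambda(x)$ for all $x\in k^{-1}\N_0$, i.e. the full statement \eqref{eq:thm-cond-suf}. This is precisely where Assumption \ref{assump-cota-H} is used in the generator step: the linear bound on $\partial_\lambda H_k(x,0)$, combined with the $\e^{\lambda_0 x}$-weighted estimate \eqref{conv-Ck} with $0<\lambda_0<\lambda$, controls the region $x>M$, after which $M$ is sent to infinity. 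In your plan Assumption \ref{assump-cota-H} is only invoked for the moment bound, so the large-$x$ regime of the generator convergence is left unaddressed; you must either supply these weighted estimates or add a compact-containment argument that justifies working with locally uniform convergence only.
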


As a particular case of the previous theorem, we state the following corollary, which is interesting due to its relationship with other results in the literature, as will be explained in the next section.

\medskip

\begin{corollary}\label{corolario}
    Under the same hypotheses as Theorem \ref{main-thm} and maintaining the notation, if $G(\lambda)=a\lambda+\frac{1}{2}b^2\lambda^2$ and $H(x,\lambda)=\alpha\lambda$ with $\alpha$ being a constant, then the limit process  $(z(t) : t\in [0,\infty))$ is a Feller branching diffusion with immigration and can be characterized as the 
    solution of the stochastic equation
    \begin{equation*}
        z(t) = z(0) + \alpha m t - (a+{\varrho m})\int_0^t z(s)\dif s + \int_0^t \sqrt{(b^2+{\varsigma^2 m^2})z(s)}\, \dif\mathcal{W}(s),
    \end{equation*}
    where $(\mathcal{W}(t):t\in[0,\infty))$ is a standard Wiener process. Furthermore, its generator is given by 
    \begin{equation*}
        A f(x) = [\alpha m - (a+{\varrho m}) x]f'(x) + \frac{1}{2}(b^2+{\varsigma^2 m^2}) xf''(x) , \qquad f\in C_c^2([0,\infty)).
    \end{equation*}
\end{corollary}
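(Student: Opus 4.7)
The plan is to apply Theorem \ref{main-thm} directly and read off what each piece of the general expression reduces to under the two structural hypotheses imposed on $G$ and $H$; there is essentially no new analytic work to do, only identification of triplets.

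First, I would note that the assumption $G(\lambda)=a\lambda+\tfrac12 b^2\lambda^2$, compared with the Lévy--Khintchine-type representation \eqref{eq-branch-mech}, forces $\mu\equiv 0$ by the uniqueness of the branching triplet $(a,b,\mu)$ (subtract the quadratic polynomial from both sides, multiply by $\lambda^{-2}$, and let $\lambda\to\infty$; the integral vanishes only if $\mu\equiv 0$). This kills the integral term $x\int_0^\infty [f(x+y)-f(x)-yf'(x)]\mu(\dif y)$ in the generator \eqref{eq-generator-thm}, and the compensated Poisson integral $\int_0^t\!\int_0^\infty\!\int_0^{y(s-)} u\,\tilde M(\dif s,\dif u,\dif z)$ in the SDE \eqref{eq:int-stoch}.

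Next, I would analyse the hypothesis $H(x,\lambda)=\alpha\lambda$ on the dependent immigration mechanism \eqref{eq-immigr-mech}. Differentiating both sides at $\lambda=0$ identifies the linear coefficient as $\alpha(x)=\alpha$ for every $x\in[0,\infty)$. The residual identity $\int_0^\infty (1-\e^{-\lambda u})\,r(x,u)\,\nu(\dif u)=0$ for all $\lambda\geq 0$ then forces the measure $r(x,u)\,\nu(\dif u)$ to be zero (its Laplace transform vanishes). Consequently the fourth term of \eqref{eq-generator-thm} and the last stochastic integral of \eqref{eq:int-stoch} both disappear, while the drift piece collapses via $\int_0^t m\,\alpha(z(s))\,\dif s=m\alpha t$.

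Assembling these reductions is pure bookkeeping: \eqref{eq-generator-thm} simplifies to $Af(x)=[m\alpha-(a+\rho_0)x]f'(x)+\tfrac12(b^2+\gamma_0\sigma_0 m^2)\,xf''(x)$, and \eqref{eq:int-stoch} collapses to the scalar diffusion with drift $m\alpha-(a+\rho_0)z(s)$ and diffusion coefficient $\sqrt{(b^2+\gamma_0\sigma_0 m^2)z(s)}$, which is precisely the Feller branching diffusion with immigration announced in the statement. Pathwise uniqueness on $[0,\infty)$ is inherited from Theorem \ref{main-thm} (and also follows from the classical Yamada--Watanabe theorem for square-root diffusions). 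The only subtle point is the uniqueness of the triplets $(a,b,\mu)$ and $(\alpha,\nu,r)$ given the closed-form mechanisms, but this is immediate from the injectivity of the Laplace transform, so I do not expect any genuine obstacle in the argument.
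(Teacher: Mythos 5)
Your overall route is exactly the paper's: Corollary \ref{corolario} is stated there without a separate proof, precisely as an immediate specialization of Theorem \ref{main-thm} in which the jump terms are absent, so reading off the reduced generator and SDE is all that is required, and your final identifications (drift $m\alpha-(a+\rho_0)x$, diffusion coefficient $\sqrt{(b^2+\gamma_0\sigma_0 m^2)x}$, pathwise uniqueness inherited from the theorem) are correct. Two of your auxiliary justifications, however, do not work as written. First, for $\mu\equiv 0$: dividing $\int_0^\infty(\e^{-\lambda u}-1+\lambda u)\,\mu(\dif u)$ by $\lambda^{2}$ and letting $\lambda\to\infty$ gives limit $0$ for \emph{every} admissible $\mu$ (the integrand over $\lambda^2$ is dominated by a multiple of $u\wedge u^2$ and tends to $0$ pointwise), so that limit detects nothing; the correct one-line argument is that, with $a,b$ the same constants as in \eqref{eq-branch-mech} (``maintaining the notation''), the hypothesis forces $\int_0^\infty(\e^{-\lambda u}-1+\lambda u)\,\mu(\dif u)=0$ for all $\lambda$, and since the integrand is strictly positive for $u>0$, $\lambda>0$, necessarily $\mu\equiv 0$. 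Second, differentiating \eqref{eq-immigr-mech} at $\lambda=0$ gives $\partial_\lambda H(x,0)=\alpha(x)+\int_0^\infty u\,r(x,u)\,\nu(\dif u)$, not $\alpha(x)$ alone, so it does not by itself identify $\alpha(x)=\alpha$; instead note that $\lambda\mapsto\int_0^\infty(1-\e^{-\lambda u})r(x,u)\,\nu(\dif u)$ is concave, nonnegative and strictly concave unless the measure $r(x,\cdot)\nu$ vanishes, so it can equal the linear function $(\alpha-\alpha(x))\lambda$ only if $r(x,u)\nu(\dif u)=0$ and then $\alpha(x)=\alpha$ (equivalently, use $\lim_{\lambda\to\infty}H(x,\lambda)/\lambda=\alpha(x)$). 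With these repairs your bookkeeping stands and matches the paper's intended argument.
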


\section{Discussion on known results and examples}\label{sec4}

In this section, we compare our results with previous ones on literature for CBPs. According to our knowledge, to date the only papers addressing the problem of scaling limit theorems for CBPs are \cite{Sriram}; \cite{miguel-ines-2012, miguel-pedro-ines-2023, liu-2024}.

The first three references prove the weak convergence in the Skorokhod space of a rescaled sequence of CBPs towards a diffusion process. Said diffusion process is nothing more than a Feller branching diffusion with immigration whose generator in the three cases can be written as
$$Lf(x)=(C_1+C_2 x)f'(x)+C_3 x f''(x),  \qquad f\in C_0^2([0,\infty))$$
where $C_1,C_2,C_3$ are real constants and $C_1,C_3$ are non-negative. Therefore, the limit process satisfies the stochastic differential equation
$$ \dif W(t) = (C_1+C_2 W(t))\dif t +  \sqrt{2C_3 W(t)}\, \dif\mathcal{W}(t),$$
where $(\mathcal{W}(t):t\in[0,\infty))$ is a standard Wiener process. The set of hypotheses necessary to establish the above convergence differs markedly from the assumptions here, those hypotheses cannot be rewritten exactly in terms of these ones nor vice versa. In those references, conditions are assumed on the moments of both the reproduction law and the control distribution, while here we work more in terms of PGFs. Corollary \ref{corolario} can be seen as a counterpart result  to \cite[Lemma 1]{Sriram}, \cite[Theorem 1]{miguel-ines-2012} and \cite[Theorem 3.1]{miguel-pedro-ines-2023}. 

Regarding the recent publication \cite{liu-2024}, it is a work closely related to this paper addressing the same problem, the study of the convergence of the sequence of scaled CBPs $(z_k(t) : t\in [0,\infty) )$, see \eqref{eq-scaling-CBP}. 
Here we generalize it.
Let us remember at this point Assumption 0, which allows us to write the control variables as the sum respectively of a size-divisible term and an immigration term, $\phi_k^{(n)}(j) = \varphi_k^{(n)}(j) + \psi_k^{(n)}(j)$. Liu's starting hypothesis is similar assuming $\phi_k^{(n)}(j) = j + \psi_k^{(n)}(j)$, that is, he imposes $\varphi_k^{(n) }(j) = j.$  
However, the big disadvantage of its starting hypothesis is that the result is not applicable to most of the control functions used in the literature, see e.g. \cite[pp. 68, 129]{miguel-ines-yanev-2018}. 
We solved this issue by allowing $\varphi_k^{(n) }(j)$ to be those random variables satisfying Assumptions \ref{assump-div} and \ref{assump-moment}.
As we will see below, these conditions are not that restrictive and allows treating typical control variables such as Poisson, binomial or negative binomial distributions. This is the strength of Theorem \ref{main-thm} compared to \cite[Theorem 4.2]{liu-2024}. 

Finally, we include in this section a series of possible valid random variables for the size-divisible term to apply the main theorem. 

\begin{itemize}
    \item \textit{Poisson control}. It is well known that the Poisson distribution is infinitely divisible, see e.g. \cite[p. 28]{book-divisible}, so it satisfies Assumption \ref{assump-div}. For convenience we write $\varphi_k^{(n)}(j) \sim \text{Poisson} (r_k(j)j)$, $j\in \N$, so that it is easy to see that Assumption \ref{assump-moment} is satisfied if there exists ${\varrho}$ such that
    \begin{equation}\label{poisson}
        \lim_{k\to \infty} \sup_{j} \left| \gamma_k\left[1 - m r_k(j)\right] - {\varrho m} \right| = 0.
    \end{equation}
    Indeed, taking $\varsigma^2 = \gamma_0 m^{-2}$, it is easy to see that Assumption \ref{assump-moment} follows from the following inequality, 
\begin{align*}
\left|F_{k}^{(j)} (\lambda) - F(\lambda)\right| 
&=
\left| k \gamma_k \left[\mathrm{e}^{-r_k(j) \lambda / k} - \left(1-\frac{\lambda}{mk}\right)\right] - \left(\varrho \lambda + \frac{1}{2}\gamma_0m^{-2}\lambda^2\right)\right| 
\\
&\leq
 \frac{\left| \gamma_k \left[1 - m r_k(j) \right] - \varrho m \right| }{m} \lambda + \left| \frac{\gamma_k r_k(j)^2 }{2k}  - \frac{\gamma_0}{2m^2} \right| \lambda^2 +  \frac{\gamma_k r_k(j)^3}{6 k^2} \lambda^3 .
\end{align*}
For example, let $$r_k(j) = \frac{1}{m}\left(1-\frac{2}{k}+\frac{1}{jk\log k}\right), \qquad k\geq 2, \quad j\in\N,$$ and ${\varrho} = 2\gamma_0 {m^{-1}}$, the reader can check that \eqref{poisson} holds. 
    \item \textit{Binomial control}. The binomial distribution trivially satisfies Assumption \ref{assump-div} as it is the sum of Bernoulli distributions. Now we conveniently write $\varphi_k^{(n)}(j) \sim \text{Binomial} (N_k(j)j,p_k(j))$ with $N_k(j)\in\N$ and $p_k(j)\in [0,1]$ for $j\in\N$. Suppose that there exist ${\varrho}\in\R$ and $p_0\in [0,1]$ such that $$\lim_{k\to \infty} \sup_{j} \left| \gamma_k\left[1 - m N_k(j)p_k(j)\right] - {\varrho m} \right| = 0,$$ $$\lim_{k\to \infty} \sup_{j} \left|p_k(j) - p_0 \right| = 0,$$ then Assumption \ref{assump-moment} holds with {$\varsigma^2=\gamma_0 m^{-1}(m^{-1}-p_0)$. 
    Indeed, using the PGF of the binomial distribution, Assumption \ref{assump-moment} follows easily taking into account that  
\begin{align*}
\left|F_{k}^{(j)} (\lambda) - F(\lambda)\right| 
&=
\left| k \gamma_k \left[ \left(1 - \frac{p_k(j) \lambda}{k} \right)^{N_k(j)} - \left(1- \frac{\lambda}{mk}\right)\right] - \left(\varrho\lambda + \frac{1}{2}\gamma_0m^{-1}(m^{-1}-p_0)\lambda^2\right)\right| 
\\
&\leq
 \frac{\left| \gamma_k \left[1 - m N_k(j) p_k(j) \right] - \varrho m \right| }{m} \lambda + \left| \frac{\gamma_k N_k(j) (N_k(j) -1) p_k(j)^2 }{2k}  - \frac{\gamma_0\frac{1}{m}(\frac{1}{m}-p_0)}{2} \right| \lambda^2 
 \\
&\quad +  \frac{\gamma_k N_k(j)^3 p_k(j)^3}{6 k^2} \lambda^3 .
\end{align*}}
    For instance, let $$N_k(j)=1+jk(k+1), \qquad p_k(j) = \frac{1}{mjk^2}, \qquad k,j\in\N,$$ ${\varrho}=-\gamma_0 {m^{-1}}$ and $p_0 =0$ as possible choices.
    \item \textit{Negative Binomial control}. As in the first case, the negative binomial distribution is infinitely divisible (the reader can consult the same reference as before). If $\varphi_k^{(n)}(j) \sim \text{NegativeBinomial} (N_k(j)j,p_k(j))$ with $p_k(j)\in (0,1]$ for $j\in\N$ and 
    $$\lim_{k\to \infty} \sup_{j} \left| \gamma_k\left[1 - m N_k(j)\frac{1-p_k(j)}{p_k(j)}\right] - {\varrho m} \right| = 0,$$ 
    $$\lim_{k\to \infty} \sup_{j} \left|\frac{1-p_k(j)}{p_k(j)} - q_0 \right| = 0,$$ 
    where ${\varrho}\in\R$ and $q_0\in [0,\infty)$, then Assumptions \ref{assump-div} and \ref{assump-moment} hold with  {$\varsigma^2 = \gamma_0 m^{-1}(m^{-1}+q_0)$. 
    Indeed, proceeding as before, the claim follows from the following inequality
\begin{align*}
\left|F_{k}^{(j)} (\lambda) - F(\lambda)\right| 
&=
\left| k \gamma_k \left[ \left(1 + \frac{1-p_k(j)}{p_k(j)}\frac{\lambda}{k} \right)^{-N_k(j)} - \left(1- \frac{\lambda}{mk}\right)\right] - \left(\varrho\lambda + \frac{1}{2}\gamma_0m^{-1}(m^{-1}+q_0)\lambda^2\right)\right| 
\\
&\leq
 \frac{\left| \gamma_k \left[1 - m N_k(j) \frac{1-p_k(j)}{p_k(j)} \right] - \varrho m \right| }{m} \lambda 
 \\
 &\quad + 
 \left| \frac{\gamma_k N_k(j) (N_k(j) +1) \frac{(1-p_k(j))^2}{p_k(j)^2} }{2k}  - \frac{\gamma_0\frac{1}{m}(\frac{1}{m}+q_0)}{2} \right| \lambda^2 
 \\
 &\quad
 +  \frac{\gamma_k N_k(j) (N_k(j)+1)(N_k(j)+2) \left(\frac{1-p_k(j)}{p_k(j)}\right)^3}{6 k^2} \lambda^3 .
\end{align*}}
Let us remember that a particular case of the negative binomial distribution is the geometric distribution. A possible example for the latter is given by choosing  
    \begin{equation*}
        N_k(j)=1, \qquad p_k(j) = \frac{m\exp [(j+k)^{-2}]}{1+m\exp [(j+k)^{-2}]}, \qquad k,j\in\N,
    \end{equation*}
    ${\varrho}=0$ and $q_0 =m^{-1}$. 
\end{itemize}

\section{Proof of the Theorem \ref{main-thm}}\label{sec5}
The proof is inspired on the ideas given in \cite{liu-2024}. The fact of introducing the size-divisible term in the control variables requires a non-trivial extra work.  We emphasize here this part in some auxiliary results needed to the proof of Theorem \ref{main-thm}.

We begin this section with a lemma that sheds light on the limit function of Remark \ref{rem-Lip}, which arose from the convergence of the sequence in Assumption \ref{assump-branch}. Lemma \ref{lem-branch-mech} is close to \cite[Corollary 1]{LI-1991} without being exactly the same result. For the ease of the reader, we include the proof details in Appendix \ref{secA1}.

\medskip

\begin{lemma}\label{lem-branch-mech}
    If Assumption \ref{assump-branch} holds, then the limit function $G$ in Remark \ref{rem-Lip} has representation \eqref{eq-branch-mech}, i.e. $G$ is a branching mechanism.
\end{lemma}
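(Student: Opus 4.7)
The plan is to recast $G_k$ in a Lévy--Khintchine--type form, extract a weak subsequential limit of the associated finite measures on $[0,\infty]$, and identify the limit with the desired branching mechanism representation.

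Let $\mu_k$ denote the law of $X/k$ when $X$ has PGF $g_k$, so that $\mu_k$ is a probability measure on $k^{-1}\N_0$ with $g_k(1-\lambda/k) = \int_{[0,\infty)}(1-\lambda/k)^{kx}\mu_k(\dif x)$, and let $m_k := \frac{\dif g_k}{\dif s}(1-)$, which is finite by Remark~\ref{rem-deriv-G_k}. Using $\int x\,\mu_k(\dif x) = m_k/k$, an elementary rearrangement yields
\begin{equation*}
G_k(\lambda) = a_k\lambda + \int_{(0,\infty)} \bigl[(1-\lambda/k)^{kx} - 1 + \lambda x\bigr]\,\nu_k(\dif x),
\end{equation*}
where $a_k := \gamma_k(1 - m_k/m)$ and $\nu_k := (k\gamma_k/m)\mu_k|_{(0,\infty)}$. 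On the support of $\nu_k$ the integrand is non-negative: for $x = j/k$ with $j \geq 2$ the binomial expansion gives $(1-\lambda/k)^j - 1 + \lambda j/k = \sum_{i \geq 2}\binom{j}{i}(-\lambda/k)^i \geq 0$, by the convexity of $s \mapsto (1-s/k)^j$, and it vanishes identically for $j = 1$.

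Next I would extract compactness. By Remark~\ref{rem-deriv-G_k}, $|a_k| \leq K_2$, so along a subsequence $a_k \to a \in \R$. A Taylor-type estimate yields, for each fixed $\lambda_0 > 0$, constants $c_{\lambda_0} > 0$ and $k_0 \in \N$ such that $(1-\lambda_0/k)^j - 1 + \lambda_0 j/k \geq c_{\lambda_0}(j/k \wedge j^2/k^2)$ for all $k \geq k_0$ and $j \geq 2$. Combined with $G_k(\lambda_0) \to G(\lambda_0) < \infty$ and the boundedness of $a_k$, this controls the mass of the finite measures $\tilde\nu_k := (x \wedge x^2)\nu_k$ on $\{x \geq 2/k\}$; the remaining contribution of the atom at $x = 1/k$ is $\gamma_k p_k^{(1)}/(mk) \leq \gamma_k/(mk) \to \gamma_0/m$ by Assumption~\ref{assump-gamma}. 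Thus $\{\tilde\nu_k\}$ is tight on the one-point compactification $[0,\infty]$, and a Prokhorov argument yields a weakly convergent subsequence $\tilde\nu_k \to \tilde\nu$, a finite measure on $[0,\infty]$.

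For the identification step, set $F(x,\lambda) := (\e^{-\lambda x} - 1 + \lambda x)/(x \wedge x^2)$ on $(0,\infty)$; it extends continuously to $[0,\infty]$ with $F(0,\lambda) = \lambda^2/2$ and $F(\infty,\lambda) = \lambda$. Its discretizations $F_k(x,\lambda) := [(1-\lambda/k)^{kx} - 1 + \lambda x]/(x \wedge x^2)$ converge uniformly to $F$ on compact subsets of $(0,\infty)$ and remain uniformly bounded on the relevant supports, so that passing to the limit yields
\begin{equation*}
G(\lambda) = a\lambda + \int_{[0,\infty]} F(x,\lambda)\,\tilde\nu(\dif x).
\end{equation*}
Decomposing $\tilde\nu = b^2\delta_0 + c\,\delta_\infty + (x \wedge x^2)\mu$ with $b^2, c \geq 0$ and $\mu$ a $\sigma$-finite measure on $(0,\infty)$ satisfying $\int_0^\infty (u \wedge u^2)\mu(\dif u) < \infty$, and absorbing $c\lambda$ into the drift, gives exactly the form \eqref{eq-branch-mech}. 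Uniqueness of the Lévy--Khintchine representation for $G$ removes the dependence on the subsequence. The main obstacle will be proving the uniform two-regime estimate on $(1-\lambda_0/k)^j - 1 + \lambda_0 j/k$ that drives both the tightness of $\tilde\nu_k$ and the control of the integrand near the boundary points of $[0,\infty]$; the rest is standard manipulation of Lévy--Khintchine-type integrals.
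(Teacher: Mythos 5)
Your route is genuinely different from the paper's: the paper never builds Lévy measures from the offspring laws, but instead verifies (via second differences of $G_k$ and monotonicity of the derivatives of the PGFs $g_k$) that $\Delta_c^2 G$ is completely monotone, invokes the representation theorem of Li (1991) to get \eqref{branch-repres}, and then uses the local Lipschitz property of $G$ (Remark \ref{rem-Lip}) to upgrade $\int(1\wedge u^2)\mu(\dif u)<\infty$ to $\int(u\wedge u^2)\mu(\dif u)<\infty$. Your Grimvall-type compactness-and-identification argument would, if completed, deliver $\int(u\wedge u^2)\mu(\dif u)<\infty$ directly from the tightness bound, which is a nice feature; the rearrangement of $G_k$, the nonnegativity of the integrand (justified by convexity of $\lambda\mapsto(1-\lambda/k)^j$ on $[0,k]$ — note the binomial expansion is not termwise nonnegative, so drop that remark), the bound $|a_k|\leq K_2$, and the treatment of the boundary at $\infty$ are all fine.

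However, there is a genuine gap in the identification step at the boundary point $0$. On the lattice sites $x=j/k$ with $j$ fixed, $j\geq 2$, one has $F_k(j/k,\lambda)=\big[(1-\lambda/k)^j-1+\lambda j/k\big]k^2/j^2 \to \frac{j-1}{j}\cdot\frac{\lambda^2}{2}$ as $k\to\infty$ (exactly $\lambda^2/4$ for $j=2$), whereas $F(j/k,\lambda)\to F(0,\lambda)=\lambda^2/2$. So $F_k$ does \emph{not} converge to $F$ uniformly up to the boundary of $[0,\infty]$, and "uniform convergence on compacts of $(0,\infty)$ plus uniform boundedness" is not sufficient to pass to the limit against weakly convergent $\tilde\nu_k$, because $\tilde\nu_k$ can carry non-vanishing mass precisely on such sites: the mass at $2/k$ is $4\gamma_k p_k(2)/(mk)\to 4\gamma_0 p/m>0$ whenever $p_k(2)\to p>0$ and $\gamma_0>0$. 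In that situation the claimed identity $G(\lambda)=a\lambda+\int_{[0,\infty]}F(x,\lambda)\,\tilde\nu(\dif x)$ is false as stated: the atom of $\tilde\nu$ at $0$ over-weights the Gaussian part by a factor coming from replacing $j(j-1)$ by $j^2$. The conclusion of the lemma is still reachable along your lines, but you must repair the identification, e.g.\ by using the discrete second-factorial-moment weight $\big(\tfrac{j(j-1)}{k^2}\big)\wedge\tfrac{j}{k}$ in place of $x\wedge x^2$ (for which the renormalized integrands do converge uniformly up to both boundary points), or by an $\epsilon$-cutoff argument showing that the contribution of $\{x\leq\epsilon\}$ is asymptotically a pure quadratic in $\lambda$ whose coefficient is identified separately, and only then letting $\epsilon\to 0$. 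As written, the step "passing to the limit yields $G(\lambda)=a\lambda+\int F\,\dif\tilde\nu$" does not follow. (Also, the appeal to uniqueness of the representation to remove the subsequence is unnecessary: $G$ is already the full limit, so exhibiting one representation suffices.)
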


\medskip

Given a branching mechanism, it is always possible to build a sequence such that Assumption \ref{assump-branch} holds, as the next result claims. We can think on this lemma as the converse, in some sense, of the above one. The proof is also postponed for Appendix \ref{secA1}.

\medskip

\begin{lemma}\label{lemma-assump-branch}
For any function $G$ with representation \eqref{eq-branch-mech} there is a sequence of functions $\{G_k\}_{k\in\mathbb{N}}$ satisfying Assumption \ref{assump-branch}.
\end{lemma}

\medskip

Next, we provide three propositions associated with the offspring and control PGFs. They will be useful later in proving the convergence of the generators of scaled sequence defined in \eqref{eq-scaling-CBP}.

Let us introduce then two sequences of functions, $\{S_k\}_{k\in\N}$ and $\{T_k\}_{k\in\N}$, related to the offspring PGFs. 
For $k\in\N,$ wet set
\begin{equation}\label{eq-Sk}
    S_k(\lambda) = \frac{k\gamma_k}{m} \left[g_k(\e^{-\lambda/k}) - \left(1-\frac{m\lambda}{k}\right)\right], \qquad \lambda\in[0,\infty),
\end{equation}
\begin{equation}\label{eq-Tk}
    T_k(\lambda) = k\left[1-g_k(\e^{-\lambda/k})\right], \qquad \lambda\in[0,\infty).
\end{equation}

\begin{proposition}\label{prop-branch-limits}
    Under Assumptions \ref{assump-gamma} and \ref{assump-branch},
    \begin{equation}\label{lim-1-prop}
        \lim_{k\to\infty} S_k(\lambda) = G(\lambda) + \frac{1}{2}\gamma_0\lambda^2,
    \end{equation}
    \begin{equation}\label{lim-2-prop}
        \lim_{k\to\infty} T_k(\lambda) = m\lambda,
    \end{equation}
    both uniformly on each bounded interval.
\end{proposition}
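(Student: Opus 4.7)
The plan is to prove both limits by reducing them to the already-established convergence $G_k \to G$ in Assumption \ref{assump-branch}. The key observation for the first limit is the decomposition
\[
S_k(\lambda) = G_k(\lambda) + \frac{k\gamma_k}{m}\bigl[g_k(\mathrm{e}^{-\lambda/k}) - g_k(1-\lambda/k)\bigr],
\]
which follows directly from comparing \eqref{eq-Sk} with the definition of $G_k$. Since $G_k \to G$ uniformly on each bounded interval, it suffices to show that the remainder term converges to $\gamma_0 \lambda^2 / 2$ uniformly on each bounded interval.

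To handle the remainder I will apply the mean value theorem. Because $\mathrm{e}^x \geq 1+x$ we have $1-\lambda/k \leq \mathrm{e}^{-\lambda/k} \leq 1$, so there exists $\xi_{k,\lambda} \in [1-\lambda/k,\mathrm{e}^{-\lambda/k}]$ with
\[
g_k(\mathrm{e}^{-\lambda/k}) - g_k(1-\lambda/k) = \frac{\dif g_k}{\dif s}(\xi_{k,\lambda})\bigl[\mathrm{e}^{-\lambda/k} - (1-\lambda/k)\bigr].
\]
A Taylor expansion of the exponential gives $\mathrm{e}^{-\lambda/k}-(1-\lambda/k) = \lambda^2/(2k^2) + O(\lambda^3/k^3)$, the $O$-term being uniform on any bounded interval of $\lambda$. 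Multiplying by $k\gamma_k/m$ produces
\[
\frac{k\gamma_k}{m}\bigl[g_k(\mathrm{e}^{-\lambda/k}) - g_k(1-\lambda/k)\bigr] = \frac{\gamma_k}{k}\cdot \frac{1}{m}\frac{\dif g_k}{\dif s}(\xi_{k,\lambda}) \cdot \frac{\lambda^2}{2} + o(1).
\]
By Assumption \ref{assump-gamma} the prefactor $\gamma_k/k$ tends to $\gamma_0$, and the task reduces to showing that $m^{-1}(\dif g_k/\dif s)(\xi_{k,\lambda}) \to 1$ uniformly on each bounded interval of $\lambda$.

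For this uniform convergence I will use that $g_k'$ is non-decreasing on $[0,1]$ (since $g_k$ is a PGF), giving the sandwich
\[
\frac{\dif g_k}{\dif s}\Bigl(1-\frac{\lambda}{k}\Bigr) \leq \frac{\dif g_k}{\dif s}(\xi_{k,\lambda}) \leq \frac{\dif g_k}{\dif s}(1-).
\]
Remark \ref{rem-deriv-G_k} shows that both $\dif g_k/\dif s (1-\lambda/k) \to m$ uniformly on bounded intervals and, via the bound \eqref{bound-g'}, that $\dif g_k/\dif s(1-) \to m$. The squeeze yields \eqref{lim-1-prop}. For \eqref{lim-2-prop} I will simply use the algebraic identity
\[
T_k(\lambda) = m\lambda - \frac{m}{\gamma_k}\,S_k(\lambda),
\]
which is immediate from rewriting \eqref{eq-Sk} and \eqref{eq-Tk}; since $S_k$ converges uniformly on bounded intervals to a continuous limit and $\gamma_k \to \infty$, the correction vanishes uniformly on bounded intervals.

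The only delicate step is the uniform sandwich argument for $g_k'(\xi_{k,\lambda})$, because the intermediate point depends on both $k$ and $\lambda$; everything else is bookkeeping with the assumed uniform bounds.
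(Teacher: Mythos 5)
Your proposal is correct and follows essentially the same route as the paper: the same decomposition $S_k = G_k + \tfrac{k\gamma_k}{m}[g_k(\e^{-\lambda/k})-g_k(1-\lambda/k)]$, the mean value theorem plus a Taylor expansion of the exponential, and the identity $T_k(\lambda)=m\lambda-\tfrac{m}{\gamma_k}S_k(\lambda)$ for the second limit. The only (harmless) variation is that you establish $\tfrac{\dif g_k}{\dif s}(\xi_{k,\lambda})\to m$ by squeezing between $\tfrac{\dif g_k}{\dif s}(1-\lambda/k)$ and $\tfrac{\dif g_k}{\dif s}(1-)$, whereas the paper bounds $\tfrac{\gamma_k}{k}\left|\tfrac{1}{m}\tfrac{\dif g_k}{\dif s}(\xi_k(\lambda))-1\right|$ directly via Remark \ref{rem-deriv-G_k}; both rest on the same uniform bound \eqref{bound-g'}.
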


\begin{proof}
    Let us begin with \eqref{lim-1-prop} fixing $l\in(0,\infty)$. There exists $k_0 \in\N$ such that $k_0\geq l$. For all $\lambda\in [0,l]$ and $k\in\N$ with $k\geq k_0$, the following equality holds
    \begin{equation*}
        S_k(\lambda) = G_k(\lambda) + \frac{k\gamma_k}{m} \left[g_k(\e^{-\lambda/k}) - g_k\bigg(1-\frac{\lambda}{k}\bigg)\right],
    \end{equation*}
    and, by Assumption \ref{assump-branch}, it is enough to show that
    \begin{equation}\label{lim-1-prop-aux}
        \lim_{k\to\infty} \sup_{\lambda \in [0,l]} \left|\frac{k\gamma_k}{m} \left[g_k(\e^{-\lambda/k}) - g_k\bigg(1-\frac{\lambda}{k}\bigg)\right] - \frac{1}{2}\gamma_0\lambda^2\right| = 0.
    \end{equation}
    Applying the mean value theorem, there exists $\xi_k(\lambda)\in [1-\lambda/k,\e^{-\lambda/k}]\subset [1-l/k,1]$ such that
    \begin{equation*}
        \frac{k\gamma_k}{m} \left[g_k(\e^{-\lambda/k}) - g_k\bigg(1-\frac{\lambda}{k}\bigg)\right] = \frac{k\gamma_k}{m} \left[\e^{-\lambda/k} - \left(1-\frac{\lambda}{k}\right)\right] \frac{\dif g_k}{\dif s}(\xi_k(\lambda)),
    \end{equation*}
    and, from the Taylor theorem with the Lagrange remainder,
    \begin{equation*}
        \e^{-\lambda/k} - \left(1-\frac{\lambda}{k}\right) = \frac{1}{2 k^2}\lambda^2 - \frac{\e^{-\eta_k /k}}{6k^3}\lambda^3,
    \end{equation*}
    for $\eta_k \in [0,l]$. Then the following inequalities hold for all $\lambda\in[0,l]$,
    \begin{align*}
        &\left|\frac{k\gamma_k}{m} \left[g_k(\e^{-\lambda/k}) - g_k\bigg(1-\frac{\lambda}{k}\bigg)\right] - \frac{\gamma_0\lambda^2}{2}\right| \\
        &\qquad \leq  \frac{1}{2}\left|\frac{\gamma_k}{mk} \frac{\dif g_k}{\dif s}(\xi_k(\lambda)) - \gamma_0 \right|\lambda^2 + \frac{\gamma_k\e^{-\eta_k /k} \lambda^3}{6mk^2} \frac{\dif g_k}{\dif s}(\xi_k(\lambda)) \\
        &\qquad\leq \frac{1}{2}\left(\frac{\gamma_k}{k}  \left| \frac{1}{m}\frac{\dif g_k}{\dif s}(\xi_k(\lambda)) - 1\right| + \left|\frac{\gamma_k}{k}- \gamma_0 \right|\right) l^2 + \frac{\gamma_k l^3}{6mk^2} \frac{\dif g_k}{\dif s}(1-) .
    \end{align*}
    Notice the use of $\dif g_k /\dif s$ as a non-negative monotonically increasing function. Thus, taking into account Assumption \ref{assump-gamma} and Remark \ref{rem-deriv-G_k},
    \begin{equation*}
        \lim_{k\to\infty} \sup_{\lambda \in [0,l]} \frac{1}{2}\left(\frac{\gamma_k}{k}  \left| \frac{1}{m}\frac{\dif g_k}{\dif s}(\xi_k(\lambda)) - 1\right| + \left|\frac{\gamma_k}{k}- \gamma_0 \right|\right) l^2 = 0,
    \end{equation*}
    \begin{equation*}
        \lim_{k\to\infty} \frac{\gamma_k l^3}{6mk^2} \frac{\dif g_k}{\dif s}(1) = 0,
    \end{equation*}
    and, as a result, \eqref{lim-1-prop-aux} is proven.
    
    Finally, \eqref{lim-2-prop} follows trivially from the next equality
    \begin{equation*}
    T_k(\lambda) = m\lambda - \frac{m}{\gamma_k}S_k(\lambda),
    \end{equation*}
    for all $\lambda\in [0,\infty).$
\end{proof}

 Next proposition includes what the interaction is like between the offspring PGF and the PGF of the immigration term $\psi_k^{(n)}(j)$. This is where Assumption \ref{assump-immigr} comes into play.

 \medskip
\begin{proposition}\label{prop-immigration} 
    Under Assumptions \ref{assump-gamma}-\ref{assump-immigr},
    \begin{equation}\label{lim-h}
        \lim_{k\to\infty} \gamma_k\left[1-h_k^{(\lfloor kx \rfloor)}\big(g_k(\e^{-\lambda/k})\big)\right] = H(x,m\lambda),
    \end{equation}
    uniformly on $[0,c_1]\times [0,c_2]$ for any $c_1,c_2\in [0,\infty)$.
\end{proposition}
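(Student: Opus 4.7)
\emph{Proof plan.} The strategy is to recognize that the left-hand side of \eqref{lim-h} equals $H_k(x,T_k(\lambda))$, where $H_k$ is as in \eqref{eq-def-H_k} and $T_k$ as in \eqref{eq-Tk}, and then to combine the uniform convergences $H_k\to H$ from Assumption \ref{assump-immigr} with $T_k(\lambda)\to m\lambda$ from Proposition \ref{prop-branch-limits}. Concretely, \eqref{eq-Tk} gives $g_k(\e^{-\lambda/k})=1-T_k(\lambda)/k$, so the defining formula of $H_k$ yields
\begin{equation*}
\gamma_k\bigl[1-h_k^{(\lfloor kx\rfloor)}(g_k(\e^{-\lambda/k}))\bigr] = H_k(x,T_k(\lambda)),
\end{equation*}
and the statement of the proposition reduces to showing that $H_k(x,T_k(\lambda))\to H(x,m\lambda)$ uniformly on $[0,c_1]\times[0,c_2]$.

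I would then apply the triangle inequality
\begin{equation*}
|H_k(x,T_k(\lambda))-H(x,m\lambda)|\leq |H_k(x,T_k(\lambda))-H(x,T_k(\lambda))|+|H(x,T_k(\lambda))-H(x,m\lambda)|,
\end{equation*}
and treat the two summands separately. Since $T_k\to m\cdot$ uniformly on $[0,c_2]$ by Proposition \ref{prop-branch-limits}, the set $\{T_k(\lambda):k\in\N,\ \lambda\in[0,c_2]\}$ is contained in some compact interval $[0,M']$. Assumption \ref{assump-immigr} applied on $[0,c_1]\times[0,M']$ then bounds the first summand uniformly in $(x,\lambda)$ by $\sup_{(y,\mu)\in[0,c_1]\times[0,M']}|H_k(y,\mu)-H(y,\mu)|$, which vanishes as $k\to\infty$.

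For the second summand, I would establish a local Lipschitz estimate for $H$ in its second variable directly from \eqref{eq-immigr-mech}: using the elementary bound $|\e^{-\lambda_1 u}-\e^{-\lambda_2 u}|\leq u\,|\lambda_1-\lambda_2|$ for $u,\lambda_1,\lambda_2\geq 0$ and invoking the linear growth condition \eqref{cond-alpha-r}, one gets
\begin{equation*}
|H(x,\lambda_1)-H(x,\lambda_2)| \leq \left[\alpha(x)+\int_0^\infty u\,r(x,u)\nu(\dif u)\right]|\lambda_1-\lambda_2| \leq K(1+x)\,|\lambda_1-\lambda_2|.
\end{equation*}
For $x\in[0,c_1]$ this bounds the second summand by $K(1+c_1)\sup_{\lambda\in[0,c_2]}|T_k(\lambda)-m\lambda|$, which again vanishes by Proposition \ref{prop-branch-limits}.

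The only real, though very mild, obstacle is ensuring that the evaluation point $T_k(\lambda)$ lies inside a compact set on which Assumption \ref{assump-immigr} provides uniform convergence of $H_k$ to $H$; this is precisely what the uniform-in-$k$ bound on $T_k$ over $[0,c_2]$ gives us for free from the uniform convergence to the continuous limit $m\lambda$. Everything else is a routine triangle-inequality decomposition combined with the Lipschitz estimate for $H$.
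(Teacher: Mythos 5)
Your proof is correct and follows essentially the same route as the paper: both identify the left-hand side as $H_k(x,T_k(\lambda))$ and combine the uniform convergence of $H_k\to H$ (Assumption \ref{assump-immigr}) with $T_k(\lambda)\to m\lambda$ from \eqref{lim-2-prop}. The only difference is that you spell out the step the paper calls ``trivial,'' namely the triangle-inequality decomposition together with the Lipschitz bound $|H(x,\lambda_1)-H(x,\lambda_2)|\leq K(1+x)|\lambda_1-\lambda_2|$ deduced from \eqref{cond-alpha-r}, which is a welcome clarification rather than a different argument.
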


\begin{proof}
    Recalling \eqref{eq-Tk}, the function $T_k$ takes values in $[0,k]$, so the composition $H_k (x, T_k(\lambda))$ is well defined for all $x,\lambda \in [0,\infty)$. In fact, for all $k\in \N$, 
    \begin{equation*}
        H_k(x, T_k(\lambda)) = \gamma_k\left[1-h_k^{(\lfloor kx \rfloor)}\big(g_k(\e^{-\lambda/k})\big)\right], \qquad x, \lambda \in [0,\infty).
    \end{equation*}
    By Assumption \ref{assump-immigr}, the sequence $H_k$ converges uniformly to $H$ as $k\to \infty$ on $[0,c_1]\times [0,c_2]$. Hereby, this proposition follows trivially from \eqref{lim-2-prop}.
\end{proof}

The last proposition corresponds to {an auxiliary limit} 
that involve the functions $f_k^{(j)}$ (see Assumption \ref{assump-div} and Remark \ref{rem-div} to recall the connection with the size-divisible term of the control variables).

\medskip

\begin{proposition}\label{prop-f}
    Under Assumptions \ref{assump-gamma}, \ref{assump-branch}, \ref{assump-div} and \ref{assump-moment},
    \begin{equation}\label{log-f}
        \lim_{k\to\infty} \sup_{j\in\N} \left|\gamma_k\left[1-\frac{\log f_k^{(j)}\big(g_k(\e^{-\lambda/k})\big) }{f_k^{(j)}\big(g_k(\e^{-\lambda/k})\big)-1}\right] + \frac{1}{2} \gamma_0\lambda \right|= 0,
    \end{equation}
    uniformly on each bounded interval.
\end{proposition}

\begin{proof}
    First, we show that
    \begin{equation}\label{lim-f}
        \lim_{k\to\infty} \sup_{j\in\N} \left|\gamma_k\left[f_k^{(j)}\big(g_k(\e^{-\lambda/k})\big) - 1\right] + \gamma_0\lambda \right|= 0.
    \end{equation}
Recalling the definitions of $S_k$ and $T_k$ from equations \eqref{eq-Sk} and \eqref{eq-Tk}, we can write
\begin{align}
f_k^{(j)}\big(g_k(\e^{-\lambda/k})\big) -1 &= \frac{1}{k\gamma_k} F_{k}^{(j)}(T_k(\lambda)) - \frac{T_k(\lambda)}{mk} \nonumber
\\
&= \frac{1}{k\gamma_k} \left[ F_{k}^{(j)}(T_k(\lambda)) + S_k(\lambda) \right] - \frac{\lambda}{k}. \label{fkgk-repre}
\end{align}
From Proposition \ref{prop-branch-limits} and Assumption \ref{assump-moment}, we see that $F_{k}^{(j)}(T_k(\lambda)) + S_k(\lambda)$ converges to $F(m\lambda)+G(\lambda)+\frac{1}{2}\gamma_0\lambda^2$ as $k\to\infty$, so that 
$$
\gamma_k\left[f_k^{(j)}\big(g_k(\e^{-\lambda/k})\big) - 1\right] + \gamma_0\lambda = \frac{F_{k}^{(j)}(T_k(\lambda)) + S_k(\lambda) }{k} + \left(\gamma_0-\frac{\gamma_k}{k}\right)\lambda,
$$
vanishes in the limit verifying \eqref{lim-f}.

    Recall now that there exists $\eta\in[x,1]$ such that $\log x = x-1 - \frac{(x-1)^2}{2}+\frac{(x-1)^3}{3\eta^3}$ for all $x\in(0,1].$ Therefore,
    \begin{equation*}
        1-\frac{\log f_k^{(j)}\big(g_k(\e^{-\lambda/k})\big)}{f_k^{(j)}\big(g_k(\e^{-\lambda/k})\big)-1} = \frac{f_k^{(j)}\big(g_k(\e^{-\lambda/k})\big)-1}{2} - \frac{\left[f_k^{(j)}\big(g_k(\e^{-\lambda/k})\big)-1\right]^2}{3[\zeta_k^{(j)}(\lambda)]^3},
    \end{equation*}
    where $\zeta_k^{(j)}(\lambda)\in[f_k^{(j)}\big(g_k(\e^{-\lambda/k})\big),1]$. From the previous expression, we get
    \begin{align*}
        \left|\gamma_k\left[1-\frac{\log f_k^{(j)}\big(g_k(\e^{-\lambda/k})\big) }{f_k^{(j)}\big(g_k(\e^{-\lambda/k})\big)-1}\right] + \frac{1}{2} \gamma_0\lambda \right| &\leq \left| \frac{\gamma_k\left[f_k^{(j)}\big(g_k(\e^{-\lambda/k})\big)-1\right]}{2} + \frac{\gamma_0\lambda}{2}  \right| \\
        &\quad + \frac{\gamma_k\left[f_k^{(j)}\big(g_k(\e^{-\lambda/k})\big)-1\right]^2}{3[f_k^{(j)}\big(g_k(\e^{-\lambda/k})\big)]^3}.
    \end{align*}
    Namely, \eqref{log-f} is deduced because both terms on the right hand side converge to $0$ due to \eqref{lim-f}.  
\end{proof}

Let us recall at this point the generator given in \eqref{eq-generator-thm}. For $f\in C_c^2([0,\infty))$, we have
\begin{align*}
    Af(x) &= [m \alpha (x) - (a+{\varrho m}) x]f'(x) + \frac{1}{2}(b^2+{\varsigma^2 m^2}) xf''(x) \nonumber\\
    &\quad + x \int_0^\infty \left[f(x+y)-f(x)-yf'(x)\right]\mu(\dif y) \nonumber \\ 
    &\quad + \int_0^\infty \left[f(x+y)-f(x)\right]r(x,m^{-1}y) \nu( m^{-1} \dif y),
\end{align*}
and let us denote by $A_k f =\gamma_k(\tilde{A}_k f -f)$ the (discrete) generator of the scaled process $(z_k(t) : t \in [0, \infty))$, where 
$$\tilde{A}_k f(x) =\mathbb{E}\left[f(k^{-1}Z_k(n+1))\mid k^{-1}Z_k(n)=x\right].$$
For each $\lambda\in [0,\infty)$, we write $$e_\lambda:x\in[0,\infty)\longmapsto\e^{-\lambda x}.$$ 

\medskip

\begin{remark} \label{remdenso}
It is proved in \cite[Lemma 2.2 ]{liu-2024} that the linear span of   $\{e_{\lambda}: \lambda \geq 0\}$ is dense in $C_c^2([0,\infty))$ in the sense of pointwise convergence.
\end{remark}

\medskip

Next theorem  establishes the approximation between the generator of the scaled process, $A_k$, and the one of the CSBPDI, $A$. In view of Remark \ref{remdenso}, we just prove the convergence for the negative exponential functions.

\medskip

\begin{theorem}\label{thm-generators}

    Under Assumptions \ref{assump-gamma}-\ref{assump-moment},
    \begin{equation}\label{eq:thm-cond-suf}
        \lim_{k\to\infty} \sup_{x\in k^{-1}\N_0} \left| A_k e_{\lambda}(x)-A e_{\lambda}(x) \right| = 0,
    \end{equation}
    for all $\lambda\in [0,\infty).$
\end{theorem}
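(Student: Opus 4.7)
The plan is to compute $Ae_\lambda(x)$ and $A_ke_\lambda(x)$ explicitly, match them, and then upgrade pointwise convergence to uniform convergence on $k^{-1}\N_0$ using the exponential decay of $e_\lambda$. Inserting $e_\lambda(x)=\e^{-\lambda x}$ into~\eqref{eq-generator-thm}, using~\eqref{eq-branch-mech}--\eqref{eq-immigr-mech} and the substitution $u=m^{-1}y$ in the last integral, yields
\begin{equation*}
Ae_\lambda(x)=\e^{-\lambda x}\bigl[x\widetilde G(\lambda)-H(x,m\lambda)\bigr],\qquad \widetilde G(\lambda):=G(\lambda)+\rho_0\lambda+\tfrac12 m^2\gamma_0\sigma_0\lambda^2.
\end{equation*}
On the discrete side, \eqref{transition-matrix-CBP} together with the factorisation $c_k^{(j)}=[f_k^{(j)}]^{j}h_k^{(j)}$ of Remark~\ref{rem-div}, writing $\alpha_k(\lambda):=g_k(\e^{-\lambda/k})$, gives, for $x=j/k\in k^{-1}\N$,
\begin{equation*}
A_ke_\lambda(x)=\gamma_k\bigl([f_k^{(j)}(\alpha_k(\lambda))]^{j}h_k^{(j)}(\alpha_k(\lambda))-\e^{-\lambda x}\bigr),
\end{equation*}
whereas $x=0$ reduces to $A_ke_\lambda(0)=-H_k(0,T_k(\lambda))\to-H(0,m\lambda)=Ae_\lambda(0)$ by Propositions~\ref{prop-branch-limits}--\ref{prop-immigration}, and $\lambda=0$ is trivial since $e_0\equiv 1$ and both generators give zero.

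The decisive algebraic step is the decomposition
\begin{equation*}
A_ke_\lambda(x)=h_k^{(j)}(\alpha_k(\lambda))\cdot\gamma_k\bigl([f_k^{(j)}(\alpha_k(\lambda))]^{j}-\e^{-\lambda x}\bigr)+\e^{-\lambda x}\cdot\gamma_k\bigl(h_k^{(j)}(\alpha_k(\lambda))-1\bigr),
\end{equation*}
which splits the generator into an immigration piece and a branching piece. The immigration piece equals $-\e^{-\lambda x}H_k(x,T_k(\lambda))$; Proposition~\ref{prop-immigration} gives $H_k(x,T_k(\lambda))\to H(x,m\lambda)$ uniformly on bounded $x$-intervals, and the monotonicity of $\dif h_k^{(j)}/\dif s$ combined with Assumption~\ref{assump-cota-H} supplies a linear-in-$x$ bound $H_k(x,T_k(\lambda))\le K_1(1+x)T_k(\lambda)$ whose product with $\e^{-\lambda x}$ handles the tail $x\ge M$ uniformly in $k$ (for $\lambda>0$).

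For the branching piece I would write $[f_k^{(j)}(\alpha_k(\lambda))]^{j}=\e^{-xb_k}$ with $b_k:=-k\log f_k^{(j)}(\alpha_k(\lambda))$ and show $\gamma_k(b_k-\lambda)\to-\widetilde G(\lambda)$ uniformly in $j\in\N$. This follows from Taylor-expanding $f_k^{(j)}$ at $s=1$ and using $-\log a=(1-a)+\tfrac12(1-a)^2+O((1-a)^3)$, together with Assumption~\ref{assump-moment} to control $(f_k^{(j)})'(1-)$ and $(f_k^{(j)})''(1-)$ uniformly in $j$, the expansion $\alpha_k(\lambda)-1=-m\lambda/k+mS_k(\lambda)/(k\gamma_k)$ coming from~\eqref{eq-Sk} with Proposition~\ref{prop-branch-limits}, and the estimate~\eqref{unif-second-der}. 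A mean value bound $|\e^{-xb_k}-\e^{-\lambda x}|\le x\e^{-x\min(b_k,\lambda)}|b_k-\lambda|$, with $b_k\ge\lambda/2$ for $k$ large (uniformly in $j$) and $\sup_{x\ge 0}x\e^{-\lambda x/2}<\infty$, then yields uniform convergence of $\gamma_k([f_k^{(j)}(\alpha_k)]^{j}-\e^{-\lambda x})$ to $\e^{-\lambda x}x\widetilde G(\lambda)$ on $k^{-1}\N$; the prefactor $h_k^{(j)}(\alpha_k)\to 1$ contributes an error of order $\gamma_k^{-1}(1+x)$ that is again absorbed by $\e^{-\lambda x}$.

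The main obstacle is maintaining uniformity in the unbounded variable $x\in k^{-1}\N_0$, since the auxiliary propositions only give uniform convergence on bounded $x$-intervals; the exponential $\e^{-\lambda x}$ saves the day in both pieces by dominating any polynomial growth in $x$. A subtle book-keeping issue is that within $\gamma_k(b_k-\lambda)$ the second-order contribution $\gamma_0\lambda^2/2$ produced by $\gamma_k\cdot k(1-a_k)\bigl(-\log a_k/(1-a_k)-1\bigr)$, with $a_k:=f_k^{(j)}(\alpha_k(\lambda))$, exactly cancels an opposite contribution coming from $-S_k(\lambda)$ in the expansion of $\gamma_k[k(1-a_k)-\lambda]$, leaving precisely $-\widetilde G(\lambda)$ in the limit.
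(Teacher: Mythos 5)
Your proposal is correct and follows essentially the same route as the paper: the same explicit formulas for $Ae_\lambda$ and $A_ke_\lambda$, the same splitting (via Remark \ref{rem-div}) into a size-divisible piece and an immigration piece, the same log/Taylor expansion of $\left[f_k^{(kx)}\big(g_k(\e^{-\lambda/k})\big)\right]^{kx}$ controlled by Propositions \ref{prop-branch-limits} and \ref{prop-f} and Assumption \ref{assump-moment} (including the cancellation of the $\tfrac12\gamma_0\lambda^2$ contributions), and the same use of Assumption \ref{assump-cota-H} together with the decay of $\e^{-\lambda x}$ to obtain uniformity over the unbounded range of $x$. The only cosmetic difference is which factor is attached to each piece of the decomposition ($\e^{-\lambda x}$ versus $\left[f_k^{(kx)}\big(g_k(\e^{-\lambda/k})\big)\right]^{kx}$), which does not change the argument.
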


\begin{proof}
    {
Firstly, let us apply the operator in \eqref{eq-generator-thm} to the negative exponential functions. We get 
\begin{align*}
         A e_\lambda (x) 
         &= -[m \alpha (x) - (a+{\varrho m}) x]\lambda\e^{-\lambda x} + \frac{1}{2}(b^2+{\varsigma^2 m^2}) x \lambda^2 \e^{-\lambda x} \\
         &\quad + x \e^{-\lambda x} \int_0^\infty (\e^{-\lambda y} - 1+ \lambda y) \mu(\dif y) \\
         &\quad + \e^{-\lambda x} \int_0^\infty (\e^{-\lambda y} - 1) 
         r(x,m^{-1}y) \nu( m^{-1} \dif y) \\
         &= \e^{-\lambda x} \left[ x \left( G(\lambda) + {F(m\lambda)} \right) - H(x,m\lambda) \right].
    \end{align*}}

Recalling \eqref{transition-matrix-CBP} and taking into account that for all $k\in\N$, $\{k^{-1} Z_k(n) : n\in\N_0\}$ is a discrete Markov chain with state space $k^{-1} \N_0$, it is easy to see that the corresponding one-step transition probabilities are characterized by
\begin{equation*}
    \tilde{A}_k e_\lambda (x) =\mathbb{E} \left[\e^{-\lambda k^{-1} Z_k(n+1)} \bigm| k^{-1} Z_k(n)=x\right] = c_k^{(kx)}\big(g_k(\e^{-\lambda/k})\big).
\end{equation*}
Hence, the generator $A_k$  is
\begin{equation*}
        A_k e_\lambda (x) = \gamma_k \left[ c_k^{(kx)}\big(g_k(\e^{-\lambda/k})\big)  - \e^{-\lambda x} \right].
\end{equation*}
For $\lambda=0,$ \eqref{eq:thm-cond-suf} trivially holds because $Ae_\lambda$ and $A_ke_\lambda$ are identically null.

Let us fix $\lambda>0$ and $\lambda_0\in(0,\lambda)$.
In particular, $ A_k e_\lambda (0) = \gamma_k \left[ h_k^{(0)}\big(g_k(\e^{-\lambda/k})\big)  - 1 \right]$ which converges to $Ae_\lambda (0)=- H(0,m\lambda)$ as $k\to\infty$ by Proposition \ref{prop-immigration}. Then, in order to prove \eqref{eq:thm-cond-suf}, it is enough to compute supreme for $x\in k^{-1}\N$ because we have already seen that the case $x=0$ holds. In what follows, we restrict ourselves to the case $x\in k^{-1}\N$.

Now Assumption \ref{assump-div} plays a key role. We can write
\begin{equation*}
    A_k e_\lambda (x) = \gamma_k \left( \left[f_k^{(kx)}\big(g_k(\e^{-\lambda/k})\big)\right]^{kx} h_k^{(kx)}\big(g_k(\e^{-\lambda/k})\big)  - \e^{-\lambda x} \right) =  B_k(x,\lambda) + C_k(x,\lambda),
\end{equation*}
where {
\begin{align}
    B_k(x,\lambda) &= \gamma_k  \left[f_k^{(kx)}\big(g_k(\e^{-\lambda/k})\big)\right]^{kx} \left(h_k^{(kx)}\big(g_k(\e^{-\lambda/k})\big)  - 1 \right) \nonumber \\
    C_k(x,\lambda) &= \gamma_k \left( \left[f_k^{(kx)}\big(g_k(\e^{-\lambda/k})\big)\right]^{kx}   - \e^{-\lambda x} \right) \label{def-C_k}
\end{align}
We will prove first that 
\begin{equation}\label{conv-Ck}
    \lim_{k\to\infty} \sup_{x\in k^{-1}\N} \e^{\lambda_0 x} \left|C_k(x,\lambda) - x \e^{-\lambda x} \left( G(\lambda) + {F(m\lambda)} \right) \right| = 0.
\end{equation}
Indeed,
    \begin{align*}
        C_k(x,\lambda) & = \e^{-\lambda x} \gamma_k \left( \left[\e^{\lambda /k} f_k^{(kx)}\big(g_k(\e^{-\lambda/k})\big)\right]^{kx}   - 1 \right) \\
        & = \e^{-\lambda x} \gamma_k \left[ \exp \bigg( kx \log \left[\e^{\lambda /k} f_k^{(kx)}\big(g_k(\e^{-\lambda/k})\big)\right] \bigg)  - 1 \right]
    \end{align*}
    and denoting
    \begin{equation}\label{eq-def-Uk}
        U_k(x,\lambda) = k\gamma_k \log \left[\e^{\lambda /k} f_k^{(kx)}\big(g_k(\e^{-\lambda/k})\big)\right],
    \end{equation}
    we apply Taylor's theorem, 
    \begin{align*}
        C_k(x,\lambda) &= \e^{-\lambda x} \gamma_k \left( \exp \left[\frac{xU_k(x,\lambda) }{\gamma_k} \right]  - 1 \right) \\
        &= \e^{-\lambda x} \left( xU_k(x,\lambda)  \right) + \e^{-\lambda x}\gamma_k \frac{1}{2}\e^{\zeta_k(x,\lambda)} \left[\frac{xU_k(x,\lambda)}{\gamma_k} \right]^2,
    \end{align*}
    where $\zeta_k(x,\lambda)$ is real number between $0$ and $\gamma_k^{-1}[xU_k(x,\lambda) ]$}. Therefore, in order to prove \eqref{conv-Ck}, it is sufficient to show that
    \begin{equation}\label{eq-conv-U}
        \lim_{k\to\infty} \sup_{x\in k^{-1}\N} \left| U_k(x,\lambda) - \left( G(\lambda) + {F(m\lambda)} \right) \right| = 0,
    \end{equation}
    \begin{equation}\label{eq-conv-taylor-term}
            \lim_{k\to\infty} \sup_{x\in k^{-1}\N} \frac{1}{2}\e^{\lambda_0x+\zeta_k(x,\lambda)-\lambda x} \frac{\left[xU_k(x,\lambda)\right]^2}{\gamma_k}  = 0.
    \end{equation}
Let us focus first in \eqref{eq-conv-U}. Introducing
    \begin{equation}\label{eq-def-Wk}
        W_k(x,\lambda) = \frac{\log \left[f_k^{(kx)}\big(g_k(\e^{-\lambda/k})\big) \right]}{f_k^{(kx)}\big(g_k(\e^{-\lambda/k})\big) - 1},
    \end{equation}
    and recalling \eqref{fkgk-repre}, we obtain
    \begin{align}
        U_k(x,\lambda) &= k\gamma_k \left(\frac{\lambda}{k} + \log \left[ f_k^{(kx)}\big(g_k(\e^{-\lambda/k})\big)\right]\right) \notag\\
        &= k\gamma_k \left(\frac{\lambda}{k} + W_k(x,\lambda)\left[ f_k^{(kx)}\big(g_k(\e^{-\lambda/k})\big) - 1\right]\right) \notag 
\\
&=  W_k(x,\lambda)\left[ F_{k}^{(kx)}(T_k(\lambda)) + S_k(\lambda) \right]  + \gamma_k \left(1 - W_k(x,\lambda)\right) \lambda. \label{eq-Uk-Wk}
    \end{align}
Thus \eqref{eq-conv-U} is equivalent to  prove the following limits
\begin{equation}\label{eq-conv-Wk}
\lim_{k\to\infty} \sup_{x\in k^{-1}\N} \left|\gamma_k\left(1-W_k(x,\lambda)\right)\lambda + \frac{1}{2} \gamma_0\lambda^2 \right|= 0,
\end{equation}
    \begin{equation}\label{aux-branching}
        \lim_{k\to\infty} \sup_{x\in k^{-1}\N} \left| W_k(x,\lambda)\left[ F_{k}^{(kx)}(T_k(\lambda)) + S_k(\lambda) \right] - \left(F(m\lambda) + G(\lambda) + \frac{1}{2}\gamma_0\lambda^2 \right)\right| = 0,
    \end{equation}
Equation \eqref{eq-conv-Wk} follows directly from \eqref{log-f}. In fact, \eqref{log-f} also implies
\begin{equation}\label{lim-W-1}
    \lim_{k\to\infty} \sup_{x\in k^{-1}\N} |W_k(x,\lambda)-1|=0, \qquad \text{for all }\lambda>0.
\end{equation}
Equation \eqref{aux-branching} follows straightforwardly from {Proposition \ref{prop-branch-limits}, Assumption \ref{assump-moment} and \eqref{lim-W-1}.}

 Finally, we show \eqref{eq-conv-taylor-term} to conclude the proof of \eqref{conv-Ck}. Since \eqref{eq-conv-U} holds, it is enough to  see that
    \begin{equation}\label{x2-exp}
        \lim_{k\to\infty} \sup_{x\in k^{-1}\N} x^2\e^{\lambda_0x+\zeta_k(x,\lambda)-\lambda x} < \infty.
    \end{equation}
Indeed, $\zeta_k(x,\lambda) < \max \left\{ 0, \gamma_k^{-1}[xU_k(x,\lambda)] \right\}$, so
\begin{equation*}
    \zeta_k(x,\lambda)+ \lambda_0 x -\lambda x < \max \left\{ 0, \gamma_k^{-1}xU_k(x,\lambda)\right\}+\lambda_0 x-\lambda x.
\end{equation*}
Recall that $U_k(x,\lambda)= \gamma_k \left(\lambda + k \log \left[ f_k^{(kx)}\big(g_k(\e^{-\lambda/k})\big)\right]\right)$ converges according to \eqref{eq-conv-U}. Then $$ \lim_{k\to\infty}\sup_{x\in k^{-1}\N}\left| k \log \left[ f_k^{(kx)}\big(g_k(\e^{-\lambda/k})\big)\right]+\lambda \right| =0.$$
Since 
    \begin{equation*}
        \zeta_k(x,\lambda)+\lambda_0 x -\lambda x < x \max \left\{\lambda_0-\lambda, \lambda_0+k\log \left[ f_k^{(kx)}\big(g_k(\e^{-\lambda/k})\big)\right]  \right\} <0, \qquad \text{for } k \text{ large enough},
    \end{equation*}
    we deduce \eqref{x2-exp} and the proof of \eqref{conv-Ck} ends.

    Now we prove that 
    \begin{equation}\label{conv-Bk}
    \lim_{k\to\infty} \sup_{x\in k^{-1}\N}\left| \e^{-\lambda x} H(x,m\lambda) +B_k(x,\lambda)\right| = 0.
\end{equation}
Recalling the definition of $B_k(x,\lambda)$, we see that
\begin{align*}
    &\left| \e^{-\lambda x} H(x,m\lambda) +B_k(x,\lambda)\right| \\
    &\quad = \left| \e^{-\lambda x} H(x,m\lambda) - \left[f_k^{(kx)}\big(g_k(\e^{-\lambda/k})\big)\right]^{kx} H_k(x, T_k(\lambda)) \right| \\
    &\quad \leq H(x,m\lambda)\left|\e^{-\lambda x} - \left[f_k^{(kx)}\big(g_k(\e^{-\lambda/k})\big)\right]^{kx}\right| + \left[f_k^{(kx)}\big(g_k(\e^{-\lambda/k})\big)\right]^{kx}\left| H(x,m\lambda) - H_k(x, T_k(\lambda)) \right|
\end{align*}
Let us work on the terms of the previous expression. From condition \eqref{cond-alpha-r} and equation \eqref{eq-immigr-mech}, we see that
\begin{equation}\label{eq:boundH}
    0 \leq H(x,\lambda) \leq K \lambda (1 + x), \qquad x, \lambda \in [0,\infty).
\end{equation}
Introducing two new quantities
\begin{align*}
    \epsilon_k(x,\lambda) &= \e^{\lambda_0 x} [C_k(x,\lambda) - x\e^{-\lambda x} R(\lambda)], \\
    R(\lambda) &= G(\lambda) + {F(m\lambda)},
\end{align*}
we deduce that
\begin{align*}
    \left|\e^{-\lambda x} - \left[f_k^{(kx)}\big(g_k(\e^{-\lambda/k})\big)\right]^{kx}\right| = \gamma_k^{-1} |C_k(x,\lambda)| \leq  \frac{\e^{-\lambda_0 x }|\epsilon_k(x,\lambda)| + x\e^{-\lambda x} |R(\lambda)|}{\gamma_k}.
\end{align*}
Likewise, from \eqref{def-C_k},
\begin{align*}
    0 \leq \left[f_k^{(kx)}\big(g_k(\e^{-\lambda/k})\big)\right]^{kx} = \gamma_k^{-1}C_k(x,\lambda)+\e^{-\lambda x} = \frac{\e^{-\lambda_0 x }\epsilon_k(x,\lambda) + x\e^{-\lambda x} R(\lambda)}{\gamma_k} +\e^{-\lambda x}. 
\end{align*}

Let us fix $M>0$. We study \eqref{conv-Bk} by first focusing on $x\in [0,M] \cap k^{-1} \N$ and next in $x\in (M,\infty) \cap k^{-1} \N$. We have
\begin{align*}
    &\sup_{x\in [0,M] \cap k^{-1}\N}\left| \e^{-\lambda x} H(x,m\lambda) +B_k(x,\lambda)\right| \\
    &\quad \leq \sup_{x\in [0,M] \cap k^{-1}\N} \left[K m \lambda (1 + x)\right] \frac{\e^{-\lambda_0 x }|\epsilon_k(x,\lambda)| + x\e^{-\lambda x} |R(\lambda)|}{\gamma_k} \\
    &\qquad+ \sup_{x\in [0,M] \cap k^{-1}\N} \left(\frac{\e^{-\lambda_0 x }\epsilon_k(x,\lambda) + x\e^{-\lambda x} R(\lambda)}{\gamma_k} +\e^{-\lambda x}\right) \left| H(x,m\lambda) - H_k(x, T_k(\lambda)) \right|
\end{align*}
By Assumption \ref{assump-gamma}, Proposition \ref{prop-immigration} and equation \eqref{conv-Ck}, we get
\begin{equation*}
    \lim_{k\to\infty} \sup_{x\in [0,M]\cap k^{-1}\N}\left| \e^{-\lambda x} H(x,m\lambda) +B_k(x,\lambda)\right| = 0.
\end{equation*}
However, Proposition \ref{prop-immigration} is not helpful to handle $\left| H(x,m\lambda) - H_k(x, T_k(\lambda)) \right|$ when $x\in (M,\infty) \cap k^{-1}\N$. Therefore, an alternative strategy is needed. 
{
By mean value theorem and Assumption \ref{assump-cota-H},
\[
    0
    \leq
    H_k(x,\lambda) 
    \leq 
    \lambda K_1(1+x),
\]
so that, using the triangle inequality and \eqref{eq:boundH}, we have
\begin{align*}
    \sup_{k \in\N } |H(x,m\lambda) - H_k(x,T_k(\lambda))| 
    &\leq
    K m\lambda (1 + x) + K_1(1+x) \sup_{k \in\N } T_k(\lambda) 
    \\
    &\leq
    \e^{\lambda_0 x} [K m\lambda + K_1\sup_{k \in\N } T_k(\lambda) ]\sup_{x\geq 0} [(1+x)\e^{-\lambda_0 x}],
\end{align*}
where clearly $\sup_{x\geq 0} [(1+x)\e^{-\lambda_0 x}] <\infty$ and $\sup_{k \in\N } T_k(\lambda) <\infty $ by \eqref{lim-2-prop}. 
Thus,
\begin{align*}
    &\sup_{x\in (M,\infty) \cap k^{-1}\N}\left| \e^{-\lambda x} H(x,m\lambda) +B_k(x,\lambda)\right| \\
    &\quad \leq \sup_{x\in (M,\infty) \cap k^{-1}\N} \left[K m \lambda (1 + x)\right] \frac{\e^{-\lambda_0 x }|\epsilon_k(x,\lambda)| + x\e^{-\lambda x} |R(\lambda)|}{\gamma_k} \\
    &\qquad+ \sup_{x\in (M,\infty) \cap k^{-1}\N} \left(\frac{\e^{-\lambda_0 x }\epsilon_k(x,\lambda) + x\e^{-\lambda x} R(\lambda)}{\gamma_k} +\e^{-\lambda x}\right) \e^{\lambda_0 x} [K m\lambda + K_1\sup_{k \in\N } T_k(\lambda) ]\sup_{x\geq 0} [(1+x)\e^{-\lambda_0 x}].
\end{align*}
Finally, from Assumption \ref{assump-gamma} and \eqref{conv-Ck}, we get
\begin{equation*}
    \varlimsup_{k\to\infty} \sup_{x\in (M,\infty)\cap k^{-1}\N}\left| \e^{-\lambda x} H(x,m\lambda) +B_k(x,\lambda)\right| \leq \e^{-(\lambda - \lambda_0)M} [K m\lambda + K_1\sup_{k \in\N } T_k(\lambda) ]\sup_{x\geq 0} [(1+x)\e^{-\lambda_0 x}],
\end{equation*}
so \eqref{conv-Bk} follows taking $M$ large enough.
}
\end{proof}

The proof of the convergence of the scaled processes is completed using essentially tightness arguments and the identification of a martingale problem. 
The next result is devoted to the former: establishing the tightness of the stochastic process sequence defined in \eqref{eq-scaling-CBP}.

\medskip

\begin{theorem}\label{thm-tight}

    Under Assumptions \ref{assump-gamma}-\ref{assump-init-gen}, the sequence of scaled processes $\{(z_k(t) : t\in [0,\infty)):k\in\N\}$ is tight in $D([0,\infty))$.
\end{theorem}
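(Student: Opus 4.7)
The plan is to deduce tightness via the classical Aldous strategy: compact containment in $[0,\infty)$ combined with a modulus-of-continuity control formulated through the generator. Concretely, I will verify for each $T>0$ the two conditions: (a) $\{\sup_{t\le T} z_k(t)\}_{k}$ is tight in $[0,\infty)$; and (b) for every $\eta,\varepsilon>0$ there exist $\delta>0$ and $k_0\in\N$ such that
\begin{equation*}
    \sup_{k\ge k_0}\sup_{\tau}\sup_{h\in[0,\delta]} \mathbb{P}\bigl[|z_k(\tau+h)-z_k(\tau)|>\varepsilon\bigr]\le\eta,
\end{equation*}
where the inner supremum runs over stopping times $\tau\le T$ of $z_k$.

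For (a), the key step is the uniform first-moment estimate $\sup_{k\in\N}\sup_{t\in[0,T]}\mathbb{E}[z_k(t)]<\infty$. Using Wald's identity, the independence between offspring and control variables, and the decomposition $\phi_k^{(n)}(j)=\varphi_k^{(n)}(j)+\psi_k^{(n)}(j)$ with $\varphi_k^{(n)}(j)$ being $j$-divisible (Assumption \ref{assump-div} and Remark \ref{rem-div}), I get
\begin{equation*}
    \mathbb{E}\bigl[Z_k(n+1)\mid Z_k(n)=j\bigr]=\frac{\dif g_k}{\dif s}(1-)\left[j\,\frac{\dif f_k^{(j)}}{\dif s}(1-)+\frac{\dif h_k^{(j)}}{\dif s}(1-)\right].
\end{equation*}
Remark \ref{rem-deriv-G_k}, Assumption \ref{assump-moment} through Remark \ref{rem-f}, and Assumption \ref{assump-cota-H} jointly yield $g_k'(1-)\,(f_k^{(j)})'(1-)=1+O(\gamma_k^{-1})$ uniformly in $j$ and $g_k'(1-)\,(h_k^{(j)})'(1-)\le C(k+j)/\gamma_k$, so that after dividing by $k$ one obtains the recursion
\begin{equation*}
    \mathbb{E}\bigl[z_k((n+1)/\gamma_k)\mid z_k(n/\gamma_k)\bigr]\le (1+C_1\gamma_k^{-1})\,z_k(n/\gamma_k)+C_2\gamma_k^{-1}.
\end{equation*}
Iterating, combining with Assumption \ref{assump-init-gen} and applying a standard Doob-type maximal inequality to the non-negative supermartingale obtained by the affine change of variable $z_k(n/\gamma_k)+C_2/C_1$ divided by $(1+C_1\gamma_k^{-1})^n$ then gives $\sup_k\mathbb{E}[\sup_{t\le T}z_k(t)]<\infty$, whence (a) follows by Markov's inequality.

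For (b), I exploit the family $\{e_\lambda:\lambda\ge 0\}$, which separates points and is dense in $C_c^2([0,\infty))$ in the sense of Remark \ref{remdenso}. From Theorem \ref{thm-generators} and the explicit identity
\begin{equation*}
    Ae_\lambda(x)=\e^{-\lambda x}\bigl[xR(\lambda)-H(x,m\lambda)\bigr],\qquad R(\lambda)=G(\lambda)+\rho_0\lambda+\tfrac{1}{2}\gamma_0\sigma_0 m^2\lambda^2,
\end{equation*}
together with the linear-growth bound $H(x,m\lambda)\le Km\lambda(1+x)$ coming from \eqref{eq:boundH}, one sees that $\sup_k\|A_ke_\lambda\|_\infty<\infty$ for each fixed $\lambda>0$. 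The one-step martingale identity
\begin{equation*}
    \mathbb{E}\bigl[e_\lambda(k^{-1}Z_k(n+1))-e_\lambda(k^{-1}Z_k(n))\bigm|\mathcal F_n\bigr]=\gamma_k^{-1}A_ke_\lambda(k^{-1}Z_k(n)),
\end{equation*}
telescoped between $\lfloor\gamma_k\tau\rfloor$ and $\lfloor\gamma_k(\tau+h)\rfloor$ and combined with optional stopping, then yields $\mathbb{E}[|e_\lambda(z_k(\tau+h))-e_\lambda(z_k(\tau))|]\le(h+\gamma_k^{-1})\|A_ke_\lambda\|_\infty$, furnishing via Markov's inequality Aldous's condition for $e_\lambda\circ z_k$. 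The passage from this to Aldous's condition for $z_k$ itself uses the bi-Lipschitz character of $e_\lambda$ on each compact subset of $[0,\infty)$ together with the compact containment established in (a).

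The main obstacle is the first-moment recursion: the size-divisible term $\varphi_k^{(n)}(j)$, absent from \cite{liu-2024}, forces one to control the $j$-dependent factor $(f_k^{(j)})'(1-)$ uniformly in $j\in\N$, which is exactly the purpose of the uniform convergence built into Assumption \ref{assump-moment}. Once this bound is in place, the rest of the argument reduces to generator-based tightness machinery that mirrors the corresponding step in \cite{liu-2024}.
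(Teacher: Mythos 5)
Your overall strategy is the same as the paper's: Aldous' criterion, a first-moment recursion exploiting the decomposition $\phi_k^{(n)}(j)=\varphi_k^{(n)}(j)+\psi_k^{(n)}(j)$, $j$-divisibility, Remark \ref{rem-deriv-G_k} and Assumptions \ref{assump-cota-H}, \ref{assump-moment}, \ref{assump-init-gen} to get $\sup_k\sup_{t\le T}\mathbb{E}[z_k(t)]<\infty$, and then a generator-based oscillation estimate for $e_\lambda\circ z_k$ transferred back to $z_k$ on the event $\max\{z_k(\tau+h),z_k(\tau)\}\le M$. The moment recursion itself is correct and matches the paper's computation. However, there is a genuine gap in your step (b): the inequality $\mathbb{E}\bigl[|e_\lambda(z_k(\tau+h))-e_\lambda(z_k(\tau))|\bigr]\le (h+\gamma_k^{-1})\|A_ke_\lambda\|_\infty$ does \emph{not} follow from telescoping the one-step identity and optional stopping. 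Telescoping controls only the predictable (drift) part pathwise; the martingale part of $e_\lambda(z_k(\tau+h))-e_\lambda(z_k(\tau))$ is not pointwise small and cannot be absorbed into that bound (its expectation of absolute value is of order $\sqrt{h}$, not $h$ — think of a near-critical branching step with zero drift but nonzero fluctuation). To close this you must estimate the martingale increment through its quadratic variation, i.e.\ bound the one-step conditional variances by $\gamma_k^{-1}\bigl(\|A_ke_{2\lambda}\|_\infty+2\|A_ke_\lambda\|_\infty\bigr)$ (using $e_\lambda^2=e_{2\lambda}$ and Theorem \ref{thm-generators} for both $\lambda$ and $2\lambda$), which yields an estimate of the form $(h+\gamma_k^{-1})C_\lambda+\sqrt{(h+\gamma_k^{-1})C'_\lambda}$; this is in substance the second-moment computation $\mathbb{E}[|e_\lambda(z_k(\tau_k+\delta_k))-e_\lambda(z_k(\tau_k))|^2]\to0$ that the paper takes from Liu's Lemma 3.2 and Theorem 3.4. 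The fix is standard, but as written your key displayed bound is false.

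A secondary inaccuracy: Doob's maximal inequality for the non-negative supermartingale you construct gives only the tail bound $\mathbb{P}[\sup_n Y_n\ge a]\le \mathbb{E}[Y_0]/a$, not $\mathbb{E}[\sup_{t\le T}z_k(t)]<\infty$ (non-negative supermartingales need not have integrable running suprema). This does not hurt you, since the tail bound already gives your compact containment (a), and in fact Aldous' criterion as used in the paper only needs tightness of $z_k(t)$ at each fixed $t$, which follows from the first-moment bound and Markov's inequality; but you should not claim the $L^1$ bound on the supremum.
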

\begin{proof}
The demonstration of the result is based on Aldous’ criterion, see \cite[Theorem 1]{aldous}. That is, to establish the tightness of the scaled processes, it is enough to verify the following two conditions. Firstly, for each fixed \(t \geq 0\), the family \(\{z_k(t): k \in \mathbb{N}\}\) is tight. Secondly, for every \(T>0\), every sequence of stopping times \(\tau_k \leq T\), 
    and every sequence of positive constants \(\delta_k \to 0\) with \(\tau_k + \delta_k \leq T\), one has that $z_k(\tau_k+\delta_k) - z_k(\tau_k)$ converges in probability to zero as $k\to\infty$.



The proof is a convenient adaptation of the arguments in \cite[Theorem 3.4]{liu-2024}.
We provide the corresponding scheme, focusing our attention in the steps requiring additional work. 

 First let us prove that
\begin{equation}\label{aux-step}
        \sup_{k\in\N} \left(\mathbb{E} [z_k(\tau_k)] + \mathbb{E} [z_k(\tau_k+\delta_k)] \right)<\infty.
    \end{equation}
    Indeed,
let us start by showing that
    \begin{equation}\label{cota-esperanza}
        \sup_{k\in\N} \sup_{t\in [0,T]} \mathbb{E} [z_k(t)] < \infty.
    \end{equation}
For $n\in\N_0$, we have
\begin{align*}
    \ev{\frac{Z_k(n+1)}{k}} &=  \ev{\frac{1}{k}\sum_{i=1}^{\phi_k^{(n)}(Z_k(n))} X_k^{n,i}} = \frac{\dif g_k}{\dif s}(1-) \ev{\frac{\phi_k^{(n)}(Z_k(n))}{k}} \\
    &=  \frac{\dif g_k}{\dif s}(1-) \ev{\frac{\varphi_k^{(n)}(Z_k(n)) + \psi_k^{(n)}(Z_k(n))}{k}}.
\end{align*}
Recalling \eqref{bound-g'}, we deduce that 
\begin{equation*}
    \frac{\dif g_k}{\dif s}(1-) \leq m \left(1+\frac{K_2}{\gamma_k}\right),
\end{equation*}
and taking into account the $j$-divisibility of $\varphi_k^{(n)}(j)$ (Assumption \ref{assump-div}), we get
\begin{equation*}
    \ev{\varphi_k^{(n)}(Z_k(n))} = \ev{Z_k(n) \frac{\dif f_k^{(Z_k(n))}}{\dif s}(1-)},
\end{equation*}
so
\begin{align*}
   \frac{\dif g_k}{\dif s}(1-)  \ev{\frac{\varphi_k^{(n)}(Z_k(n))}{k}} &\leq \left(1+\frac{K_2}{\gamma_k}\right) \ev{\frac{Z_k(n)}{k} m \frac{\dif f_k^{(Z_k(n))}}{\dif s}(1-)} \\
   &\leq \left(1+\frac{K_2}{\gamma_k}\right) \ev{\frac{Z_k(n)}{k} \left(1 + \frac{1}{\gamma_k} \sup_{j\in\N} \left|\gamma_k \left( m \frac{\dif f_k^{(j)}}{\dif s}(1-)-1\right)\right|\right)} \\
   &\leq \left(1+\frac{K_2}{\gamma_k}\right) \left(1+\frac{K_3}{\gamma_k}\right) \ev{\frac{Z_k(n)}{k}},
\end{align*}
where {$K_3$ was introduced in \eqref{K3def}.} 

Likewise, we look for an upper bound for the other term of the control,
\begin{align*}
    \ev{\psi_k^{(n)}(Z_k(n))} &= \sum_{j=0}^{\infty} \ev{\psi_k^{(n)} (j)} \prob{Z_k(n) = j} = \sum_{j=0}^{\infty} \frac{\dif h^{(j)}}{\dif s}(1-) \prob{Z_k(n) = j} \\
    &\leq \sum_{j=0}^{\infty} \frac{k}{\gamma_k} K_1\left(1+\frac{j}{k}\right)\prob{Z_k(n) = j} = \frac{k}{\gamma_k} K_1\left(1+\ev{\frac{Z_k(n)}{k}}\right),
\end{align*}
where we applied Assumption \ref{assump-cota-H} in the inequality. Therefore,
\begin{align*}
    \ev{\frac{Z_k(n+1)}{k}} &\leq \left(1+\frac{K_2}{\gamma_k}\right) \left(1+\frac{K_3}{\gamma_k}\right) \ev{\frac{Z_k(n)}{k}} + m \left(1+\frac{K_2}{\gamma_k}\right) \frac{K_1}{\gamma_k} \ev{\frac{Z_k(n)}{k}} + m \left(1+\frac{K_2}{\gamma_k}\right) \frac{K_1}{\gamma_k} \\
    &\leq \left(1+\frac{K_2}{\gamma_k}\right) \left(1+\frac{K_3+mK_1}{\gamma_k}\right) \ev{\frac{Z_k(n)}{k}} + \left(1+\frac{K_2}{\gamma_k}\right) \frac{mK_1}{\gamma_k} .
\end{align*}
Choosing $K_4> K_3 + mK_1$ and applying an induction argument, we get
\begin{align*}
    \ev{\frac{Z_k(n+1)}{k}} 
    & \leq \left[\left(1+\frac{K_2}{\gamma_k}\right) \left(1+\frac{K_4}{\gamma_k}\right)\right]^{n+1} \ev{\frac{Z_k(0)}{k}} + \sum_{j=0}^n \left(1+\frac{K_2}{\gamma_k}\right) \frac{K_4}{\gamma_k} \left[\left(1+\frac{K_2}{\gamma_k}\right) \left(1+\frac{K_4}{\gamma_k}\right)\right]^{j} \\
    &= \left[\left(1+\frac{K_2}{\gamma_k}\right) \left(1+\frac{K_4}{\gamma_k}\right)\right]^{n+1} \ev{\frac{Z_k(0)}{k}} + \left(1+\frac{K_2}{\gamma_k}\right)\frac{K_4}{\gamma_k} \frac{\left[\left(1+\frac{K_2}{\gamma_k}\right) \left(1+\frac{K_4}{\gamma_k}\right)\right]^{n+1}-1}{\left(1+\frac{K_2}{\gamma_k}\right) \left(1+\frac{K_4}{\gamma_k}\right) - 1}.
\end{align*}
Then \eqref{cota-esperanza} holds because
\begin{align*}
    \ev{z_k(t)} \leq \left[\left(1+\frac{K_2}{\gamma_k}\right) \left(1+\frac{K_4}{\gamma_k}\right)\right]^{\gamma_k T} \ev{\frac{Z_k(0)}{k}} + \left(1+\frac{K_2}{\gamma_k}\right)\frac{K_4}{\gamma_k} \frac{\left[\left(1+\frac{K_2}{\gamma_k}\right) \left(1+\frac{K_4}{\gamma_k}\right)\right]^{\gamma_k T}-1}{\left(1+\frac{K_2}{\gamma_k}\right) \left(1+\frac{K_4}{\gamma_k}\right) - 1}
\end{align*}
and
\begin{align*}
    \lim_{k\to\infty} &\left[\left(1+\frac{K_2}{\gamma_k}\right) \left(1+\frac{K_4}{\gamma_k}\right)\right]^{\gamma_k T} \ev{\frac{Z_k(0)}{k}} + \left(1+\frac{K_2}{\gamma_k}\right)\frac{K_4}{\gamma_k} \frac{\left[\left(1+\frac{K_2}{\gamma_k}\right) \left(1+\frac{K_4}{\gamma_k}\right)\right]^{\gamma_k T}-1}{\left(1+\frac{K_2}{\gamma_k}\right) \left(1+\frac{K_4}{\gamma_k}\right) - 1} \\
    & \quad = \e^{(K_2+K_4)T} \cdot 0 + 1 \cdot K_4T = K_4T.
\end{align*}

Let $\mathrm{id}$ be the identity function. Recalling the generator $A_k$, we see that
\begin{align*}
    A_k\mathrm{id}(x) &= \gamma_k \mathbb{E} \left[\mathrm{id}(k^{-1} Z_k(j+1)) \mid k^{-1} Z_k(j)=x\right] - \gamma_k \mathrm{id}(x) \\
    &= \gamma_k\left( \ev{ \frac{1}{k}\sum_{i=1}^{\phi_k^{(n) }(kx)}X_k^{n,i} }-x\right) .
\end{align*}
Hence
\begin{align*}
    |A_k \mathrm{id}(x) |  &= \gamma_k\left| \frac{\dif g_k}{\dif s}(1-)\ev{\frac{\phi_k^{(n) }(kx)}{k}} -x\right| 
    = \gamma_k\left| \frac{\dif g_k}{\dif s}(1-)\frac{\ev{\varphi_k^{(n) }(kx)}+\ev{\psi_k^{(n) }(kx)}}{k} -x\right| \\
    &= \gamma_k\left| \frac{\dif g_k}{\dif s}(1-)\frac{\frac{\dif h_k^{(kx)}}{\dif s}(1-)+kx \frac{\dif f_k^{(kx)}}{\dif s}(1-)}{k} -x\right|.
\end{align*}
By Remark \ref{rem-deriv-G_k} and Assumptions \ref{assump-cota-H} and \ref{assump-moment}, there exists a positive constant $K_5$ such that
\begin{align*}
    |A_k \mathrm{id}(x) |  &\leq \frac{\dif g_k}{\dif s}(1-) K_1 (1+x)+ x \gamma_k \left|\frac{\dif g_k}{\dif s}(1-)\frac{\dif f_k^{(kx)}}{\dif s}(1-)-1\right| \\
    &\leq \frac{\dif g_k}{\dif s}(1-) K_1 (1+x)+ x \gamma_k \left|\frac{1}{m}\frac{\dif g_k}{\dif s}(1-)-1\right| + x \frac{1}{m}\frac{\dif g_k}{\dif s}(1-)\gamma_k \left|m\frac{\dif f_k^{(kx)}}{\dif s}(1-)-1\right|
    \\
    &\leq K_5 (1+x).
\end{align*}

For a bounded and measurable real function $f$ on $[0,\infty)$, we now define
\begin{equation}\label{martingale}
    M_k^{f} (n) = \gamma_k f(k^{-1} Z_k(n)) - \gamma_k f (k^{-1} Z_k(0)) - \sum_{i=0}^{n-1} A_kf(k^{-1} Z_k(i)).
\end{equation}
It is easy to check that $M_k^{f} (n)$ is a martingale with respect to $\sigma (Z_k(0),\ldots,Z_k(n))$. Consequently,
\begin{align*}
    0 = \ev{M_k^{\mathrm{id}} (0)} = \ev{M_k^{\mathrm{id}} (\lfloor \gamma_k t\rfloor)} = \gamma_k \ev{z_k(t)} - \gamma_k \ev{z_k(0)} - \ev{\sum_{i=0}^{\lfloor \gamma_k t\rfloor-1} A_k\mathrm{id}(k^{-1} Z_k(i))},
\end{align*}
or equivalently,
\begin{align*}
    \ev{z_k(t)} = \ev{z_k(0)} + \ev{\sum_{i=0}^{\lfloor \gamma_k t\rfloor-1} \gamma_k^{-1} A_k\mathrm{id}(k^{-1} Z_k(i))} = \ev{z_k(0)} + \ev{\int_0^{\lfloor \gamma_k t\rfloor/\gamma_k} A_k\mathrm{id}(z_k(s)) \dif s}.
\end{align*}
Finally, by Doob’s Stopping Theorem, we get
\begin{align*}
    \ev{z_k(\tau_k)} &\leq \ev{z_k(0)} + \ev{\int_0^{\tau_k}A_k \mathrm{id}(z_k(t))\dif t} \\
    &\leq \ev{z_k(0)} + \int_0^{T}K_5 \ev{1+z_k(t)}\dif t \\
    &\leq \sup_{k\in \N} \ev{z_k(0)} + TK_5 (1+\sup_{k\in\N}\sup_{t\in [0,T]}\ev{z_k(t)}).
\end{align*}
From \eqref{cota-esperanza} and Assumption \ref{assump-init-gen}, we see that $\sup_{k\in\N} \ev{z_k(\tau_k)}<\infty.$ The same argument leads to $\sup_{k\in\N} \ev{z_k(\tau_k+\delta_k)}<\infty$ and then \eqref{aux-step} follows.

As a consequence of (\ref{cota-esperanza}), we also have that  $\{z_k(t): k\in \N\}$ is tight for a fix $t\geq 0 $.

Finally, it can be proved as in \cite[Lemma 3.2 and Theorem 3.4]{liu-2024}, making use of Theorem \ref{thm-generators}, that for $\lambda>0$,
\begin{equation*}\label{rho-limite}
\lim_{k\to\infty}\ev{|e_\lambda(z_k(\tau_k+\delta_k))-e_\lambda(z_k(\tau_k))|^2}=0,
\end{equation*}
and hence, for any constant $M>0$ and $\epsilon>0$, 
$$\lim_{k\to\infty}\mathbb{P}(|z_k(\tau_k+\delta_k)-z_k(\tau_k)|>\epsilon, \max\{z_k(\tau_k+\delta_k),z_k(\tau_k)\}\leq M)=0.$$
Considering \eqref{aux-step} and the fact that
\begin{align*}
    \mathbb{P}(|z_k(\tau_k+\delta_k)-z_k(\tau_k)|>\epsilon) &\leq \mathbb{P}(|z_k(\tau_k+\delta_k)-z_k(\tau_k)|>\epsilon, \max\{z_k(\tau_k+\delta_k),z_k(\tau_k)\}\leq M)\\
    &\quad +\mathbb{P}(z_k(\tau_k+\delta_k)\geq M)+\mathbb{P}(z_k(\tau_k)\geq M),
\end{align*}
we deduce the convergence in probability of $z_k(\tau_k+\delta_k)-z_k(\tau_k)$ as $k,M\to\infty$. Therefore, we showed the tightness of the scaled processes.
\end{proof}


The work above now puts us in a position to conclude with the proof of the main result of the paper.
\begin{proof}[Proof of Theorem \ref{main-thm}]
    Taking the previous results into account, the main theorem is proved using reasoning similar to that used in \cite[Theorem 4.2]{liu-2024}. By Theorem \ref{thm-tight}, $\{(z_k(t): t\in [0,\infty)):k\in \N\}$ is relatively compact. Hence, it is possible to find a subsequence $\{(z_{k_i}(t) : t\in [0,\infty)):k_i\in\N\}$ converging to a càdlàg process. That is, there exists a probability measure $Q$ on $D([0,\infty))$ such that $Q=\lim_{i\to\infty}P^{(k_i)}$, with $P^{(k_i)}$ being the distribution of process $(z_{k_i}(t): t\in [0,\infty))$ in $D([0,\infty))$. 
    By Skorokhod’s representation theorem, see for instance \cite[Theorem 6.7]{bill199}, there exists a probability space $(\Omega', \mathcal{F}', P')$ supporting càdlàg processes $(x(t): t \geq 0)$ and $(x_{k_i}(t): t \geq 0)$ whose laws on $D([0,\infty))$ are $Q$ and $P^{(k_i)}$, respectively, and such that $x_{k_i} \to x$ $P'$-a.s.
From this, using \cite[Proposition~5.2, Chapter~3]{ethier-kurtz-1986}, we obtain that $x_{k_i}(t)$ converges to $x(t)$ for all continuity points $t$ of $(x(t):t\in [0,\infty))$. We recall that the set of discontinuity points of this process is at most countable, see \cite[Lemma~7.7, Chapter~3]{ethier-kurtz-1986}.

Now, we use the relationship between weak solutions of equation \eqref{eq:int-stoch} and the associated martingale problem (see \cite[Theorem~4.1]{liu-2024}). Specifically, a positive càdlàg process $(z(t) : t\in [0,\infty))$ is a weak solution of \eqref{eq:int-stoch} with initial value $z(0)$ if and only if
\begin{equation}\label{martingaleprob}
    f\!\left(z(t)\right)
    = f\!\left(z(0)\right)
    + \int_0^t A f\!\left(z(s)\right)\mathrm{d}s
    + \text{local martingale}, 
    \qquad t \geq 0, \quad f \in C_c^2([0,\infty)).
\end{equation}

Let $P$ denote the distribution of $(z(t):t\in [0,\infty))$ in $D([0,\infty))$. If we prove that $(x(t): t\in[0,\infty))$ solves the martingale problem (\ref{martingaleprob}), then it is a weak solution of \eqref{eq:int-stoch}, and the pathwise uniqueness of  \eqref{eq:int-stoch} implies that $Q=P$, and therefore the proof of Theorem \ref{main-thm} is completed. 

Indeed, let us consider (\ref{martingale}) with $f=e_{\lambda}$, which we can rewrite as
\begin{equation}\label{eq1}
e_\lambda(x_{k_i}(t)) = e_\lambda(x_{k_i}(0)) + \int_0^{\lfloor \gamma_{k_i} t \rfloor / \gamma_{k_i}} A_{k_i} e_\lambda(x_{k_i}(s))\, \dif s + \gamma_{k_i}^{-1} M^{e_\lambda}_{k_i}(\lfloor \gamma_{k_i} t \rfloor). 
\end{equation}

Taking into account that
\begin{align*}
    \int_0^{\lfloor \gamma_{k_i} t \rfloor / \gamma_{k_i}} 
        \left| A_{k_i} e_\lambda(x_{k_i}(s)) - A e_\lambda(x(s)) \right| \,\dif s 
&\leq \int_0^{\lfloor \gamma_{k_i} t \rfloor / \gamma_{k_i}} 
        \left| A_{k_i} e_\lambda(x_{k_i}(s)) - A e_\lambda(x_{k_i}(s)) \right| \,\dif s \\
&\quad + \int_0^{\lfloor \gamma_{k_i} t \rfloor / \gamma_{k_i}} 
        \left| A e_\lambda(x_{k_i}(s)) - A e_\lambda(x(s)) \right| \,\dif s,
\end{align*}
and using Theorem~\ref{thm-generators} together with the considerations made in the first paragraph of this proof, we can take the limit as $i \to \infty$ in \eqref{eq1}. By the dominated convergence theorem, it follows that
\begin{equation}\label{eq2}
    M^f(t) = f(x(t)) - f(x(0)) - \int_0^t A f(x(s))\,\dif s, 
    \qquad \text{with } f = e_\lambda,
\end{equation}
is a martingale.
For  $ f \in C_c^2([0,\infty))$, and considering Remark \ref{remdenso}, let $\{f_n\}$ be a sequence of functions in the linear span of $\{e_{\lambda}: \lambda \geq 0\}$ such that $\lim_{n\to\infty}f_n=f$, $\lim_{n\to\infty}f'_n=f'$ and $\lim_{n\to\infty}f''_n=f''$ uniformly on $[0,\infty)$, so that  \( \lim_{n\to\infty} A f_n(x) = A f(x) \) uniformly on each bounded interval. From \eqref{eq2},
\begin{equation}\label{eq3}
f_n(x(t)) = f_n(x(0)) + \int_0^t A f_n(x(s))\dif s + M^{f_n}(t),  
\end{equation}
where $M^{f_n}$ is a martingale. Let \( \widetilde{\tau}_M := \inf \{ t > 0 : x(t) \geq M \} \). Since $(x(t):t\in [0,\infty))$ is a  càdlàg process, \( \lim_{M\to\infty}\widetilde{\tau}_M =  \infty \) a.s. Considering   \( t \wedge \widetilde{\tau}_M \) instead of $t$, and taking limit in (\ref{eq3}), we have that
$$
M^f (t \wedge \widetilde{\tau}_M) = f(x(t\wedge \widetilde{\tau}_M)) - f(x(0)) - \int_0^t A f(x(s \wedge \widetilde{\tau}_{M^-}))\dif s
$$
is a martingale. Therefore, we conclude that $(x(t): t\in[0,\infty))$ solves the martingale problem \eqref{martingaleprob}.
\end{proof}



\backmatter

\bmhead{Acknowledgments}
The authors express their gratitude to Professor Zenghu Li for his insightful comments and discussions which have helped to improve the paper. We are also grateful to the anonymous referee for their valuable suggestions on Lemma \ref{lemma-assump-branch} and Assumption \ref{assump-moment}, which improved the manuscript.

\section*{Declarations}

\begin{itemize}
\item Funding: The authors are supported by grant PID2023-152359NB-I00 funded by MICIU/AEI/ 
10.13039/501100011033 and by
“ERDF/EU”. PM-C also acknowledges grant FPU20/06588, funded by the Spanish Ministry of Universities, and the support of EPSRC programme grant EP/W026899/2.
\item Competing interests: The authors declare that they have no competing interests.
\end{itemize}

\begin{appendices}

\section{Proof of Lemmas \ref{lem-branch-mech} and \ref{lemma-assump-branch}}\label{secA1}

We must first introduce the concept of complete monotonicity for a function.

\medskip

\begin{definition}\label{def-compl-monot}
    A function $\theta\in C([0,\infty))$ is called completely monotone if
    \begin{equation*}
        (-1)^j \Delta_d^j \theta (\lambda) \geq 0, \qquad \lambda\in[0,\infty),
    \end{equation*}
    for every $j\in\N_0$ and $d\in[0,\infty)$, where $\Delta_d^0$ is the identity operator, $\Delta_d^j = \Delta_d^{j-1} \Delta_d$ is defined recursively for $j\in\N$ and $\Delta_d$ is the difference operator
    \begin{equation*}
        \Delta_d f (\lambda) = f (\lambda+d) - f (\lambda), \qquad \lambda+d, \lambda\in I,
    \end{equation*}
    where $I$ is an interval in $\R$ and $f$ is a continuous function in $I$.
\end{definition}

\medskip

We point out below that the successive difference operators introduced in the previous definition are closely related to the order-corresponding derivatives.

\medskip

\begin{remark}\label{rem-dif-oper}
    If $f\in C^\infty (I)$, then
    \begin{equation*}
        \frac{\dif^n f}{\dif\lambda^n}(\lambda) = \lim_{h\to0+} \frac{\Delta_h^n f(\lambda)}{h^n}, \qquad \lambda\in I, \quad n\in\N_0,
    \end{equation*}
    and, by mean-value theorem, for all $\lambda, h \in[0,\infty)$ there exists $\Tilde{\lambda}\in [\lambda, \lambda+h]$ such that
    \begin{equation*}
        \Delta_h^n f(\lambda) = h^n \frac{\dif^n f}{\dif\lambda^n}(\Tilde{\lambda}).
    \end{equation*} 
\end{remark}

We present now the proof of Lemma \ref{lem-branch-mech}.

\begin{proof}[Proof of Lemma \ref{lem-branch-mech}]
    Firstly, let us show that the function $G$ has representation
    \begin{equation}\label{branch-repres}
        G(\lambda) = a_0 + a_1 \lambda + \int_0^\infty \left(\e^{-\lambda u} - 1 + \frac{\lambda u}{1+u^2}\right) \left(1-\e^{-u}\right)^{-2} L(\dif u),
    \end{equation}
    for all $\lambda\in[0,\infty)$, where $a_0,a_1\in\R$, $L(\dif u)$ is a finite measure on $[0,\infty)$ and the integrand in \eqref{branch-repres} is defined at $u=0$ by continuity as
    \begin{align*}
        \lim_{u\to 0} \left(\e^{-\lambda u} - 1 + \frac{\lambda u}{1+u^2}\right) \left(1-\e^{-u}\right)^{-2} &= \lim_{u\to 0} \left( \frac{\lambda^2u^2}{2} -\lambda u + \frac{\lambda u}{1+u^2}\right) u^{-2} \\
        &= \lim_{u\to 0} \left( \frac{\lambda^2}{2} - \frac{\lambda u}{1+u^2}\right) = \frac{\lambda^2}{2}.
    \end{align*}
    By the main theorem from \cite{LI-1991}, the function $G$, which is continuous as noted in Remark \ref{rem-Lip}, has representation \eqref{branch-repres} if and only if for every $c\geq 0$ the function $\Delta_c^2 G$ is completely monotone. For any $j\in\N_0$, $c\in[0,\infty)$, $d\in[0,\infty)$, $\lambda\in[0,\infty)$ and sufficiently large $k\in\N$, $(-1)^j \Delta_d^j \Delta_c^2 G_k(\lambda)$ is well defined and
    \begin{equation*}
        (-1)^j \Delta_d^j \Delta_c^2 G(\lambda) = \lim_{k\to\infty} (-1)^j \Delta_d^j \Delta_c^2 G_k(\lambda).
    \end{equation*}
   Thus $\Delta_c^2 G$ is completely monotone if we prove that $(-1)^j \Delta_d^j \Delta_c^2 G_k(\lambda) \geq 0$. From Remark \ref{rem-dif-oper}, it is enough to see that 
   \begin{equation*}
        (-1)^j \frac{\dif^j \Delta_c^2 G_k}{\dif \lambda^j}(\lambda)  \geq 0.
    \end{equation*}
   Indeed, we have
\begin{equation*}
        \Delta_c G_k(\lambda) = \frac{k\gamma_k}{m}\left[ \Delta_c g_k\bigg(1-\frac{\cdot}{k}\bigg)(\lambda)+\frac{mc}{k}\right],
    \end{equation*}
    \begin{equation*}
        \Delta_c^2 G_k(\lambda) = \frac{k\gamma_k}{m}\left[ \Delta_c^2 g_k\bigg(1-\frac{\cdot}{k}\bigg)(\lambda)\right],
    \end{equation*}
    \begin{equation*}
        (-1)^j \frac{\dif^j \Delta_c^2 G_k}{\dif \lambda^j}(\lambda) = \frac{k\gamma_k}{m} \frac{1}{k^j} \Delta_c^2 \frac{\dif^j g_k}{\dif \lambda^j}\bigg(1-\frac{\cdot}{k}\bigg)(\lambda).
    \end{equation*}
    Now, we apply mean-value theorem to 
    \begin{align*}
        \Delta_c^2 \frac{\dif^j g_k}{\dif \lambda^j}\bigg(1-\frac{\cdot}{k}\bigg)(\lambda) &= \left[\frac{\dif^j g_k}{\dif \lambda^j}\bigg(1-\frac{\lambda}{k}\bigg) - \frac{\dif^j g_k}{\dif \lambda^j}\bigg(1-\frac{\lambda+c}{k}\bigg)\right] \\
        &\quad - \left[\frac{\dif^j g_k}{\dif \lambda^j}\bigg(1-\frac{\lambda+c}{k}\bigg) - \frac{\dif^j g_k}{\dif \lambda^j}\bigg(1-\frac{\lambda+2c}{k}\bigg)\right],
    \end{align*}
    then there exist $s_1\in [1-(\lambda+c)/k,1-\lambda/k]$ and $s_2\in [1-(\lambda+2c)/k,1-(\lambda+c)/k]$ such that
    \begin{equation*}
        \Delta_c^2 \frac{\dif^j g_k}{\dif \lambda^j}\bigg(1-\frac{\cdot}{k}\bigg)(\lambda) =  \frac{c}{k} \frac{\dif^{j+1} g_k}{\dif \lambda^{j+1}}(s_1) -\frac{c}{k} \frac{\dif^{j+1} g_k}{\dif \lambda^{j+1}}(s_2).
    \end{equation*}
    But $\dif^j g_k / \dif \lambda^j$ is a power series with non-negative coefficients (in particular it is monotically increasing for all $j$) because $g_k$ is a PGF, so
    \begin{equation*}
        \Delta_c^2 \frac{\dif^j g_k}{\dif \lambda^j}\bigg(1-\frac{\cdot}{k}\bigg)(\lambda) \geq  \frac{c}{k} \left[\frac{\dif^{j+1} g_k}{\dif \lambda^{j+1}}\bigg(1-\frac{\lambda+c}{k}\bigg) - \frac{\dif^{j+1} g_k}{\dif \lambda^{j+1}}\bigg(1-\frac{\lambda+c}{k}\bigg)\right] = 0.
    \end{equation*}

    As a result, representation \eqref{branch-repres} is proven, in fact, $a_0=0$ due to $G(0)=a_0$ and $G_k(0) = 0$. This representation can be equivalently reformulated as
    \begin{equation}\label{branch-repres-2}
        G(\lambda) = b_1 \lambda + b_2 \lambda^2 + \int_0^\infty \left(\e^{-\lambda u} - 1 + \frac{\lambda u}{1+u^2}\right) \mu(\dif u), \qquad \lambda\in[0,\infty),
    \end{equation}
    where $b_1\in\R$, $b_2\in[0,\infty)$ and $\mu(\dif u)$ is a $\sigma$-finite measure on $(0,\infty)$ with 
    \begin{equation*}
        \int_0^\infty \left(1\wedge u^2\right)\mu(\dif u) <\infty.
    \end{equation*}
    The equivalence in the rewriting follows from $a_1 = b_1$ and $L(\dif u ) = (1-\e^{-u})^2 \mu(\dif u) + 2b_2 \delta_0 (\dif u)$, where $\delta_0$ is the Dirac measure for 0.

    Finally, as we stated in Remark \ref{rem-Lip}, $G$ is locally Lipschitz, then by  \cite[Proposition 1.48]{Li2022} the measure $\mu(\dif u)$ satisfies
    \begin{equation*}
        \int_0^\infty \left(u\wedge u^2\right)\mu(\dif u) <\infty,
    \end{equation*}
    and representation \eqref{branch-repres-2} can be written as a branching mechanism \eqref{eq-branch-mech} with $$a = b_1 - \int_0^\infty \frac{u^3}{1+u^2} \mu (\dif u)$$ and $b = \sqrt{2b_2}.$
\end{proof}

To conclude, we prove Lemma \ref{lemma-assump-branch}. 

\begin{proof}[Proof of Lemma \ref{lemma-assump-branch}]
We use a parallel logic to the steps of Proposition 2.6 in \cite{LiChapter}. We will decompose the target limit function $G(\lambda)$ into two parts, separating the linear term from the quadratic and integral terms. Let us start with the later denoting
$$G_0(\lambda) = G(\lambda) - a\lambda - \frac{1}{2} b^2 \lambda^2 = \int_0^\infty \left(\e^{-\lambda u}-1+\lambda u\right)\mu(\dif u).$$

The idea is to construct a sequence of functions $\{G_{0,k}\}_{k\in\mathbb{N}}$ converging to the the quadratic and integral terms of $G$. We do this by using the PGF of a Poisson distribution with parameter $m$.
For all $m>0$, choosing
$$\gamma_{0,k} = \frac{b^2}{m}k + \e^m \int_0^\infty u\left(1-\e^{-ku}\right)\mu(\dif u)$$
and
$$g_{0,k}(s) = \e^{m(s-1)} + \frac{m}{k\gamma_{0,k}}G_0(k(1-s)), \qquad |s| \leq 1,$$
it is easy to see that the sequence of functions 
$$G_{0,k}(\lambda) = \frac{k\gamma_{0,k}}{m}\left[g_{0,k}\bigg(1-\frac{\lambda}{k}\bigg)-\left(1-\frac{m\lambda}{k}\right)\right] = \frac{k\gamma_{0,k}}{m}\left[\e^{-m\lambda/k} - \left(1-\frac{m\lambda}{k}\right)\right] + G_0(\lambda) $$
is uniformly Lipschitz on bounded intervals and converges uniformly to $\frac{1}{2}b^2\lambda^2 + G_0(\lambda)$ on bounded intervals. Let us simply check that $g_{0,k}$ is a valid PGF. Indeed, $g_{0,k}(1) = 1$, $g_{0,k}(0) = \e^{-m} + \frac{m}{k\gamma_{0,k}}G_0(k) \ge 0$ and their derivatives at $s=0$ are non-negative due to our choice of $\gamma_{0,k}$:
$$\frac{\dif g_{0,k}}{\dif s} (0) = m\e^{-m} - \frac{m}{k\gamma_{0,k}} \int_0^\infty ku\left(1- \e^{-ku}\right)\mu(\dif u) \geq 0,$$
$$\frac{\dif^n g_{0,k}}{\dif s^n}(0) = m^n\e^{-m} + \frac{m}{k\gamma_{0,k}} k^n \int_0^\infty u^n \e^{-ku}\mu(\dif u) \ge 0, \qquad n\geq 2.$$

Now we address the linear term. If $a = 0$, the proof is complete. If $a \neq 0$, we construct a sequence $G_{1,k}(\lambda)$ converging to $a\lambda$.
We need an integer power $n > m$ to ensure we can handle negative drift, let us say $n = \lfloor m \rfloor + 2$. Defining
$$ \gamma_{1,k} = \begin{cases} a & \text{if } a > 0, \\ \frac{m|a|}{n-m} & \text{if } a < 0, \end{cases} \quad g_{1,k}(s) = \begin{cases} 1 & \text{if } a > 0, \\ s^n & \text{if } a < 0, \end{cases}\quad \text{and}\quad G_{1,k}(\lambda) = \frac{k\gamma_{1,k}}{m}\left[g_{1,k}\bigg(1-\frac{\lambda}{k}\bigg)-\left(1-\frac{m\lambda}{k}\right)\right],$$
it is easy to check that $G_{1,k}(\lambda)$ converges to $a\lambda$.

Finally, we combine the two parts defining
$$\gamma_k = \gamma_{0,k} + \gamma_{1,k} \qquad \text{and} \qquad g_k = \frac{\gamma_{0,k}g_{0,k} + \gamma_{1,k}g_{1,k}}{\gamma_k}.$$
By construction, $g_k$ is a valid PGF. Let us define $G_k$ by \eqref{sequuencegk} with $\gamma_k$ and $g_k$ given above. Then $G_k=G_{0,k} + G_{1,k}$ satisfies Assumption \ref{assump-branch}.
\end{proof}




\end{appendices}


\bibliography{sn-bibliography}

\end{document}